\def\NZQ{\mathbb}               % the font for N,Z,Q,R,C
\def\KK{{\NZQ K}}
\def\frk{\frak}               % font for "Fraktur"
\def\Phi{{\frk n}}
\def\Phi{{\frk N}}
\def\MC{{\mathcal C}}
\def\MH{{\mathcal H}}
\def\MI{{\mathcal I}}
\def\ML{{\mathcal L}}
\def\MX{{\mathcal X}}
\def\MY{{\mathcal Y}}
\def\MZ{{\mathcal Z}}
\newcommand{\R}{\mathbb{R}}
\newcommand{\C}{\mathbb{C}}
\newcommand{\I}{\mathcal{I}}
\newcommand{\LL}{\mathcal{L}}
\newcommand{\ID}{J_{\MX,\Delta}}
\newcommand{\la}{\langle}
\newcommand{\ra}{\rangle}
\newcommand{\lv}{\lvert}
\newcommand{\rv}{\rvert}
\newcommand{\plex}{\prec_{\rm lex}}
\newcommand{\bolda}{{\bf a}}
\newcommand{\boldb}{{\bf b}}
\newcommand{\init}{{\rm in}_{\prec}}
\newcommand{\matrank}{{\rm rank}}
\def\opn#1#2{\def#1{\operatorname{#2}}} % to make operators
\opn\chara{char} \opn\length{\ell} \opn\pd{pd} \opn\rk{rk}
\opn\projdim{proj\,dim} \opn\injdim{inj\,dim} \opn\rank{rank}
\opn\depth{depth} \opn\grade{grade} \opn\height{height}
\opn\embdim{emb\,dim} \opn\codim{codim}
\opn\Tr{Tr} \opn\bigrank{big\,rank}
\opn\superheight{superheight}\opn\lcm{lcm}
\opn\trdeg{tr\,deg}%\emph{
\opn\reg{reg} \opn\lreg{lreg} \opn\ini{in} \opn\lpd{lpd}
\opn\size{size} \opn\sdepth{sdepth}
\opn\link{link}\opn\fdepth{fdepth}\opn\lex{lex}
\opn\div{div} \opn\Div{Div} \opn\cl{cl} \opn\Cl{Cl}
\opn\Spec{Spec} \opn\Supp{Supp} \opn\supp{supp} \opn\Sing{Sing}
\opn\Ass{Ass} \opn\Min{Min}\opn\Mon{Mon}
\opn\Ann{Ann} \opn\Rad{Rad} \opn\Soc{Soc}
\opn\Im{Im} \opn\Ker{Ker} \opn\Coker{Coker} \opn\Am{Am}
\opn\Hom{Hom} \opn\Tor{Tor} \opn\Ext{Ext} \opn\End{End}
\opn\Aut{Aut} \opn\id{id}
\opn\nat{nat}
\opn\pff{pf}%   \pf exists already
\opn\Pf{Pf} \opn\GL{GL} \opn\SL{SL} \opn\mod{mod} \opn\ord{ord}
\opn\Gin{Gin} \opn\Hilb{Hilb}\opn\sort{sort}
\opn\aff{aff} \opn\con{conv} \opn\relint{relint} \opn\st{st}
\opn\lk{lk} \opn\cn{cn} \opn\core{core} \opn\vol{vol}
\opn\link{link} \opn\star{star}\opn\lex{lex}\opn\set{set}
\opn\gr{gr}
\def\pot#1#2{#1[\kern-0.28ex[#2]\kern-0.28ex]}
\opn\dirlim{\underrightarrow{\lim}}
\opn\inivlim{\underleftarrow{\lim}}
\let\iso=\cong
\def\Implies{\ifmmode\Longrightarrow \else
        \unskip${}\Longrightarrow{}$\ignorespaces\fi}
\def\implies{\ifmmode\Rightarrow \else
        \unskip${}\Rightarrow{}$\ignorespaces\fi}
\def\iff{\ifmmode\Longleftrightarrow \else
        \unskip${}\Longleftrightarrow{}$\ignorespaces\fi}
\newtheorem{Theorem}{Theorem}[section]
\newtheorem{Lemma}[Theorem]{Lemma}
\newtheorem{Corollary}[Theorem]{Corollary}
\newtheorem{Proposition}[Theorem]{Proposition}
\newtheorem{Remark}[Theorem]{Remark}
\theoremstyle{remark}
\newtheorem{Example}[Theorem]{Example}
\newtheorem{Definition}[Theorem]{Definition}
\let\epsilon\varepsilon
\let\kappa=\varkappa
\opn\dis{dis}
\def\pnt{{\raise0.5mm\hbox{\large\bf.}}}
\opn\Lex{Lex}
\newcommand\Perp{\protect\mathpalette{\protect\independenT}{\perp}}
\def\independenT#1#2{\mathrel{\rlap{$#1#2$}\mkern2mu{#1#2}}}
\newcommand{\ind}[3][]{\left.#2 \Perp\!_{#1}\, #3 \inD}
\newcommand{\inD}[1][\relax]{\def\argone{#1}\def\temprelax{\relax}
  \ifx\argone\temprelax\right.\else\,\middle|#1\right.{}\fi}
\DeclareMathOperator*{\bigtimes}{\textnormal{\Large $\times$}}
\begin{document}

\title{Conditional independence ideals \\ with hidden variables}

\author{Oliver Clarke$^{1}$}
\address{$^{1}$University of Bristol, School of Mathematics, BS8 1TW, Bristol, UK}
\email{oliver.clarke@bristol.ac.uk}
\author{Fatemeh Mohammadi$^{2,3}$}
\noindent\address{$^{2}$Department of Mathematics: Algebra and Geometry, Ghent University, 9000 Gent, Belgium}
\address{$^{3}$The Department of Mathematics and Statistics, Faculty of Science and Technology, UiT – The Arctic University of Norway, 9037 Troms, Norway}
\email{fatemeh.mohammadi@ugent.be}
\author{Johannes Rauh$^{4}$}
\address{$^{4}$Max Planck Institute for Mathematics in the Sciences, Leipzig, Germany}
\email{jrauh@mis.mpg.de}
\begin{abstract}
We study a class of determinantal ideals that are related to conditional independence (CI) statements with hidden
variables.  Such CI statements correspond to determinantal conditions on
% flattenings of marginals of the tensor of joint probabilities of
a matrix whose entries are probabilities of events involving the observed random variables.
We focus on an example that generalizes the CI ideals of the intersection axiom.
In this example, the minimal primes are again determinantal
ideals, which is not true in general.
\end{abstract}

\keywords{Determinantal ideals, primary decomposition, conditional independence ideals}
\maketitle

\section{Introduction}

This work is concerned with the study of conditional independence (CI) statements with hidden variables through the lens of algebraic statistics and commutative algebra.
We are led to a class of ideals generated by minors of a matrix of indeterminates, where the indeterminates correspond to probabilities.  The classical case of CI ideals without hidden variables corresponds to a class of ideals generated by 2-minors.

\subsection{Structure of the paper.}

In Section~\ref{sec:CI_ideals}, we explain the setting of conditional independence ideals that
motivates our study.  In Section~\ref{sec:CI_with_hidden}, we show how to define conditional
independence ideals with hidden variables.  The determinantal ideals that we are interested in are
defined in Section~\ref{sec:our_CI_ideals}.  In Section~\ref{sec:determ-hyper-ideals}, we show that
the ideals can also be defined as determinantal hyperedge ideals.  The reader who is not interested
in the statistical motivation may skip Sections~\ref{sec:CI_ideals}
to~\ref{sec:our_CI_ideals}.

The main results will be presented in Section~\ref{sec:main-results}, with proofs given in
Section~\ref{sec:proofs}.  The final Section~\ref{sec:gen} discusses possible
generalizations.

\subsection{Conditional independence ideals}
\label{sec:CI_ideals}

Conditional independence is an important tool in statistical modelling~\cite{Studeny05:Probabilistic_CI_structures}.
For example, it gives an interpretation to graphical models, both undirected (i.e.\ Markov fields) and directed (i.e.\ Bayesian networks) \cite{MDLW18:Handbook_of_graphical_models, caines2022lattice}.

Suppose a family of random variables satisfies a list of conditional independence statements.
Given a sub-family of these random variables, it is in general difficult to say which constraints are implied by these CI statements on this sub-family.
This situation arises when some of the random variables are either hidden (i.e.\ not observed) or not of interest. Here we are interested in the constraints that are satisfied by the observed variables (or the variables of interest) alone.
For example, in causal reasoning, it is important to know what constraints on the observed variables are caused by hidden variables~\cite{Steudel-Ay}.

For graphical models, the problem of describing the induced model on the observed variables is well-studied.
The joint distribution on the observed variables is constrained by both equalities and inequalities.
When all random variables are finite (as we will assume throughout this paper),
conditional independence can be characterized by polynomial constraints on the joint probabilities of all variables.  Therefore, the equality and inequality constraints on the joint probabilities of the observed variables can also be formulated algebraically; that is, the set of joint probability distributions forms a semi-algebraic set~\cite{DrtonSturmfelsSullivant09:Algebraic_Statistics, Sullivant}.
The models that have been studied so far include hidden Markov models, mixture models or restricted Boltzmann machines~\cite{AllmanRhodesTaylor:Semialgebraic_phylogenetic_Markov_model,SeigalMontufar17:RBM_32}.
Some, but not all, of the constraints among the observed variables have an interpretation in terms of conditional independence.
For example, the Verma constraints can be interpreted as coming from a conditional independence statement \cite{RichardsonRobinsShpitser11:Nested_Markov_Properties}.

In algebraic statistics, the ideals corresponding to conditional independence statements without hidden variables are called \emph{conditional independence ideals}; they have been studied in \cite{Fink,herzog2010binomial,Rauh,SwansonTaylor11:Minimial_Primes_of_CI_Ideals}.
Here, we study a class of ideals that are derived from conditional independence statements with hidden variables.  While ordinary conditional independence ideals are generated by 2-minors of a matrix of probabilities, CI statements with hidden variables introduce constraints that correspond to higher minors.

\subsection{CI statements with hidden variables}

\label{sec:CI_with_hidden}

Consider two random variables $X,Y$ with finite state spaces~$\MX,\MY$.  The joint distribution of $X,Y$ can be
identified with a $\MX\times\MY$-matrix $P=(p_{x,y})_{x\in\MX,y\in\MY}$, where $p_{x,y}=P(X=x,Y=y)$.  The variables $X$
and $Y$ are \emph{independent} if and only if $P$ has rank one~\cite{DrtonSturmfelsSullivant09:Algebraic_Statistics, Sullivant}.

In the presence of a third random variable $Z$ with state space~$\MZ$, the joint probability distribution becomes a
tensor $P=(p_{x,y,z})_{x\in\MX,y\in\MY,z\in\MZ}$.  The variables $X$ and $Y$ are \emph{independent given $Z$} if and only if
for each $z\in\MZ$ the matrix $P_{z} := (p_{x,y,z})_{x\in\MX,y\in\MY}$ has rank one.  In this case we write
$\ind XY[Z]$ and we call this expression a \emph{conditional independence (CI) statement.}

Suppose that $Z$ is a hidden (unobserved) variable, and that we only have access to the marginal
distribution of $X$ and~$Y$, which is equal to the marginal tensor~$P^{X,Y}=\sum_{z\in\MZ}P_{z}$.  If $X$ and $Y$ are
independent given~$Z$, then $P^{X,Y}$ has rank at most~$|\MZ|$.
More precisely, $P^{X,Y}$ has \emph{non-negative rank} at most $|\MZ|$; that is, $P^{X,Y}$ can be written as a non-negative linear combination of
non-negative rank-one matrices.
Conversely, any (non-negative real) matrix $P^{X,Y}$ of non-negative rank at most $|\MZ|$ that satisfies the normalization condition~$\sum_{x,y}P^{X,Y}_{x,y}=1$ arises in this way, as the $(X,Y)$-marginal of a joint distribution tensor $P$ of three variables $X,Y,Z$ that satisfies $\ind XY[Z]$.

The set of matrices of non-negative rank at most $r$ form a semi-algebraic set.  A characterization
of this semi-algebraic set for $r=2$ is given in \cite{allman2015tensors}.  For $r>2$, the
semi-algebraic conditions are not known in general.  It is known that its Zariski closure equals the
set of all matrices of rank at most $r$, and it is described by the determinantal ideal of all $(r+1)$-minors
of~$P$.
  % When $\MC$ contains more than one statement, there is little hope to obtain a complete description of~$\MP_{\MC}$.

When combining several CI statements with hidden variables, the set of corresponding joint
distributions of the observed variables will again be a semi-algebraic set. In this paper, we
restrict attention to a subset of the equality constraints consisting of rank constraints.  While
these rank constraints are in general not even enough to describe the Zariski closure, they do
provide valuable insights.

% In this way, any CI statement with hidden variables corresponds to a collection of determinantal conditions on
% flattenings of marginals of the tensor of joint probabilities of all observed random variables.
% We are interested in the
% set $\MP_{\MC}$ of marginal distributions of the observed variables of those joint distributions (of observed and hidden variables) that satisfy the statements in~$\MC$.
% The rank conditions give us some constraints on~$\MP_{\MC}$.

% In general, the set $\MP_{\MC}$ is not easy to describe.  For example, consider the case of two observed variables $X,Y$, a single hidden variable $H$ and $\MC=\{ \ind X{Y}[H]\}$.  Then
% $\MP_{\MC}$ consists of those non-negative $\lvert \MX \rvert \times\lvert \MY\rvert$-matrices $P=(p_{x,y})_{x\in\MX,y\in\MY}$ that are of
% \emph{non-negative rank} at most $|\MH|$ (that is, they can be written as a non-negative linear combination of
% non-negative rank-one matrices), and that satisfy the normalization condition~$\sum_{x,y}p_{x,y}=1$.

\begin{Example}
  \label{ex:phylogenetic}
  The invariants of edges that appear in the implicit description of phylogenetic models encode rank
  conditions that arise from CI statements with hidden variables~\cite{AllmanRhodes07:Reconstruction_Evolution}.  Suppose that
  $Y_{1},\dots,Y_{n_{o}}$ are observed base pairs of extant species at a specific location (assuming
  that the alignment is known). Each edge $e$ in a phylogenetic tree induces a split of the observed
  variables, say $Y_{1},\dots,Y_{k}|Y_{k+1},\dots,Y_{n_{o}}$.  This split corresponds to a CI
  statement $\ind{\{Y_{1},\dots,Y_{k}\}}{\{Y_{k+1},\dots,Y_{n_{o}}\}}[H_{e}]$, where $H_{e}$ is one
  of the unobserved nodes of~$e$.  For a single base pair, $H_{e}$ can take four values, which correspond to the four types of bases found in a DNA molecule (adenine, cytosine, guanine, and thymine), and so
  the invariants of edges consist of 5-minors of a matrix of probabilities.

  The invariants of edges do not give a full description of the phylogenetic model, nor its Zariski closure.
  Nevertheless, as shown in~\cite{CasanellasFernandez11:Relevant_phylogenetic_invariants}, the invariants of edges
  contain enough information to distinguish between different tree topologies.
  % We see this example as a confirmation that the restriction to rank conditions can provide useful
  % insight into probabilistic models with hidden variables.
\end{Example}

\subsection{Ideals of CI statements}

\label{sec:our_CI_ideals}

The ideals that we are interested in arise from the following situation:
Consider three observed variables $X,Y_{1},Y_{2}$, taking values in the finite sets $\MX,\MY_{1},\MY_{2}$ of cardinalities $|\MX|=d$, $|\MY_{1}|=k$, $|\MY_{2}|=\ell$, and two hidden variables $H_{1},H_{2}$, taking values in the finite sets $\MH_{1},\MH_{2}$ of cardinalities $|\MH_{1}|=s-1$,  $|\MH_{1}|=t-1$.

The joint distribution of the observed variables can be identified with a non-negative matrix
$P\in\R^{\MX\times\MY}$, where $\MY=\MY_{1}\times\MY_{2}$.  The matrix $P$ has $d$ rows and
$k\ell$ columns, and its entries $p_{x,(y_{1},y_{2})}$ sum to one.  The row indices
correspond to states $x\in\MX$, and the column indices correspond to joint states
$(y_1,y_{2})\in\MY$.
For each $i \in \MY_1$, we define $R_i = \{(i, j) \in \MY : j \in \MY_2 \}$ and for each $j \in \MY_2$ we define $C_j = \{(i,j) \in \MY : i \in \MY_1 \}$.

Let $K$ be a field and $R$ be the polynomial ring over $K$ in the variables $p_{x,(y_{1},y_{2})}$ corresponding to the entries of $P$.
From the statistical point of view, it makes sense to assume that $K=\R$ or $K=\C$ (for algebraic convenience), but as it turns out, our algebraic results are independent of the choice of the field.
For each $F \subseteq\MY$ denote by $P_{F}$ the submatrix of $P$ consisting of the columns indexed
by~$F$.
Then we define two ideals in~$R$:
\begin{align*}
  J_{\ind{X}{Y_{1}}[\{Y_{2},H_{1}\}]} &= \big\langle
    \text{$s$-minors of } P_{C_{j}} \text{ for all }j\in\MY_2
  \big\rangle, \\
  J_{\ind{X}{Y_{2}}[\{Y_{1},H_{2}\}]} &= \big\langle
    \text{$t$-minors of } P_{R_{i}} \text{ for all }i\in\MY_1
  \big\rangle.
\end{align*}
Finally, for $\MC = \big\{\ind{X}{Y_{1}}[\{Y_{2},H_{1}\}], \ind{X}{Y_{2}}[\{Y_{1},H_{2}\}]\big\}$, we define
\begin{equation*}
  J_{\MC} = J_{\ind{X}{Y_{1}}[\{Y_{2},H_{1}\}]} + J_{\ind{X}{Y_{2}}[\{Y_{1},H_{2}\}]}.
\end{equation*}
Note that the ideals $J_{\ind{X}{Y_{1}}[\{Y_{2},H_{1}\}]}$, $J_{\ind{X}{Y_{2}}[\{Y_{1},H_{2}\}]}$ and $J_{\MC}$ not only depend on the CI statements, but also on $d$, $k$,
  $\ell$, $s$ and~$t$.  However, these additional parameters will usually be fixed and clear from
  the context, so we omit them from the notation.

By what has been said in Section~\ref{sec:CI_with_hidden}, the following holds: \emph{if $X,Y_{1},Y_{2},H_{1},H_{2}$ are random variables that satisfy the CI statements in $\MC$, then the joint distribution $P$ of the observed variables $X,Y_{1},Y_{2}$ lies in the vanishing set of~$J_{\MC}$.}

\begin{comment}
The joint distribution of the observed variables can be identified with a non-negative matrix $P\in\R^{\MX\times\MY}$ with $d=|\MX|$ rows whose
entries $p_{x,(y_{1},\dots,y_{n_{o}})}$ sum to one.  The row indices correspond to states $x\in\MX$,
and the column indices
correspond to joint states $(y_1,\ldots,y_{n_o})\in\MY$ of the remaining observed variables~$Y_{1},Y_{2},\dots,Y_{n_o}$.
%
Let $\ind XA[B]$ be a proper CI statement, and let $r = \bigtimes_{i: H_{i}\in B}|\MH_{i}|$.  For any
state {$y_{B}=(y_{B,i})\in\bigtimes_{i:Y_{i}\in B}\MY_{i}$} of the observed variables in~$B$, we let $P_{y_B}$ be the submatrix of $P$ on the columns $B$; that is a submatrix of $P$ with entries $p_{x,(y_{1},\dots,y_{n_{o}})}$, where $x\in X$ and $y_{i}=y_{B,i}$ for all $i$ with $Y_{i}\in B$.
% Moreover, denote by $P^{A}_{y_{B}}$ the marginal tensor of $Y$ and $\{X_{i}:i\in A\}$.  We interprete $P^{A}_{y_{B}}$ as a matrix
% with $|\MY|$ rows and $\prod_{i:X_{i}\in B}|\MX_{i}|$ columns.
Let $K$ be a field and $R$ be the polynomial ring in the variables $p_{x,(y_{1},\dots,y_{n_{o}})}$ corresponding to the entries of $P$.  Then we define
\begin{equation*}
  J_{\ind XA[B]} = \left\langle
    \text{$(r+1)$-minors of } P_{y_{B}} \text{ for all }{y_{B}\in\bigtimes_{i:Y_{i}\in B}}\MY_{i}
  \right\rangle\subset R.
\end{equation*}
For a family $\MC$ of proper CI statements, let
\begin{equation*}
  J_{\MC} = \oplus_{\ind XA[B]\in\MC}J_{\ind XA[B]}.
\end{equation*}
\end{comment}

\begin{Example}
  Let % $n_{o}=2$ and
  $\MX = \{1,2,3\} = \MY_{1} = \MY_{2}$.  The matrix $P\in\R^{\MX\times\MY}\cong\R^{3\times 9}$
  has the form
  \begin{equation*}
    \left(
    \begin{tikzpicture}[baseline={([yshift=-\the\dimexpr\fontdimen22\textfont2\relax]
                    current bounding box.center)}] % 
      \foreach \x in {1,2,3} {
        \foreach \y in {1,2,3} {
          \foreach \z in {1,2,3} {
            \node (P\x\y\z) at (4 * \z + 1.3 * \y, 4 - \x - 0.1 * \y) {$p_{\x,(\y,\z)}$};G
          }
        }
      }
    \end{tikzpicture}
    \right).
  \end{equation*}
  The small vertical displacement of the entries indicates that the
  matrix $P$ can be seen as a flattening of a 3-tensor, and it allows
  to conveniently mark the submatrices $P_{C_{j}}$ and $P_{R_{i}}$.

  The coloured lines in the following display indicate the two submatrices $P_{R_{1}}$ and~$P_{C_{3}}$:
%  $P_{y_{1}=1}$ and $P_{y_{2}=3}$:
  \begin{equation*}
    \left(
    \begin{tikzpicture}[baseline={([yshift=-\the\dimexpr\fontdimen22\textfont2\relax]
                    current bounding box.center)}] % 
      \foreach \x in {1,2,3} {
        \foreach \y in {1,2,3} {
          \foreach \z in {1,2,3} {
            \node[inner sep=0pt] (P\x\y\z) at (4 * \z + 1.3 * \y, 4 - \x - 0.1 * \y) {$p_{\x,(\y,\z)}$};G
          }
        }
      }
      \begin{scope}[red,dashed]
        \begin{scope}[transform canvas={yshift=1.5mm}]
          \foreach \x in {1,2,3} {
            \draw (P\x11) -- (P\x12) -- (P\x13);
          }
        \end{scope}
        \foreach \x in {1,2} {
          \draw (P11\x) -- (P21\x) -- (P31\x);
        }
        \draw[transform canvas={xshift=-0.5mm}] (P113) -- (P213) -- (P313);
      \end{scope}
      \begin{scope}[green,dotted,line width=1.5]
        \foreach \x in {1,2,3} {
          \draw (P\x13) -- (P\x23) -- (P\x33);
          \draw (P1\x3) -- (P2\x3) -- (P3\x3);
        }        
      \end{scope}
    \end{tikzpicture}
    \right).
  \end{equation*}

  Suppose that $|\mathcal{H}_1|=1$.  In this case, the hidden variable $H_{1}$ is constant and can be omitted from all CI statements; i.e.\ $\ind{X}{Y_{1}}[\{Y_{2},H_{1}\}]$ is equivalent to $\ind{X}{Y_{1}}[Y_{2}]$
  \footnote{
  While this degenerate case is the case covered by our main results, we chose to keep $H_{1}$ in order to indicate in which direction we would like to generalize our results.}.
  The ideal $J_{\ind{X}{Y_{1}}[\{Y_{2},H_{1}\}]}$ is generated by all 2-minors of the matrix $P_{C_{3}}$ connected by dotted green lines % $P_{y_{2}=3}$
  and the 2-minors of $P_{C_{1}}$ and $P_{C_{2}}$.  % the matrices $P_{y_{2}=1}$ and $P_{y_{2}=2}$.

  Similarly, suppose that $|\mathcal{H}_2|=2$.  Then $J_{\ind{X}{Y_{2}}[\{Y_{1},H_{2}\}]}$ is generated by the determinant of the matrix $P_{R_{1}}$ \textcolor{blue}connected by dashed red lines % $P_{y_{1}=1}$
  and the determinants of $P_{R_{2}}$ and $P_{R_{3}}$. % the matrices $P_{y_{1}=2}$ and $P_{y_{1}=3}$.
\end{Example}

Our main results below give properties of the ideal $J_{\MC}$ in the case
$2\leq k\leq \ell \leq d$, $s = 2$, $t = \ell$.  In particular, Theorem~\ref{thm:intersection} describes
its minimal primes.
  Before stating our main results, we show how $J_{\MC}$ can be seen as a determinantal hyperedge ideal.

\subsection{Determinantal hyperedge ideals}
\label{sec:determ-hyper-ideals}

Throughout we let $\MX$, $\MY$ be finite sets, and consider a matrix $P=(p_{x,y})_{x\in\MX,y\in\MY}$ of indeterminates.  Let $K$ be a field, we work over the polynomial ring $R = K[P]$.
Denote by $d=|\MX|$ the number of rows of~$P$.
For any
$A\subseteq\MX$, $B\subseteq\MY$ with $|A|=|B|$ denote by $[A|B]$ the determinant of the submatrix of $P$ consisting of
those entries with row indices in $A$ and with column indices in~$B$.  When $A=\MX$, then we simply write $[B]$ for $[A|B]$.
We also abbreviate $[a_{1},\dots,a_{k}|b_{1},\dots,b_{k}]$ for $\big[\{a_{1},\dots,a_{k}\}\big|\{b_{1},\dots,b_{k}\}\big]$.

Formally, the sign of this determinant depends on the ordering of the rows and columns.  Later we are only interested
whether this determinant vanishes or not, and so the ordering of the rows and columns is not important.  However, when
studying the Gr\"obner bases of our ideals, we consider ordered sets, and we assume that $\MX = [d]$.

\begin{Definition}
  A \emph{hypergraph} $\Delta$ with vertex set $\MY$ is a subset of the power set~$2^{\MY}$.
  The elements of a hypergraph are \emph{hyperedges}.
  The \emph{determinantal hyperedge ideal} of~$\Delta$ is
  \begin{equation*}
    \ID = \big\langle [A|B] : A\subseteq\MX, B\in\Delta, |A| = |B|  \big\rangle \subset R.
  \end{equation*}
\end{Definition}
Since hyperedges $F\in\Delta$ with $|F|>|\MX|=d$ do not contribute to $\ID$, we may assume that $|F|\le d$ for
all~$F\in\Delta$ when studying $J_{\MX,\Delta}$.

\begin{Example}
Let $\MX = [3]$ and $\Delta$ be the hypergraph on $\MY=[6]$,
\[
\Delta = \{123, 124, 356, 456, 34\}. 
\]
Then the ideal $\ID \subset K[p_{1,1}, \dots, p_{3,6}]$ is generated by the following minors of $P$,
\[
[123], [124], [356], [456], [12|34], [13|34], [23|34].
\]
\end{Example}

Given natural numbers $k, \ell, s, t$ we define $\MY_1 = [k], \MY_2 = [\ell]$ and
$\MY=\MY_{1}\times\MY_{2}$.  It is sometimes convenient to identify $\MY$ with $[k\ell]$ and to
arrange its entries in a matrix:
\begin{equation*}
  \MY \cong [k\ell] = (\MY_{i,j})_{i,j} = 
  \begin{pmatrix}
    1 & k+1 & \cdots & (\ell-1)k+1 \\
    2&  k+2 & \cdots &  (\ell-1)k+2\\
    \vdots & \vdots &  & \vdots  \\
    k &  2k & \cdots &  \ell k\\
  \end{pmatrix}.
\end{equation*}
The rows of (this matrix which represents) $\MY$ are $R_i = \{\MY_{i,j} \in \MY : j \in \MY_2 \}$ for each $i \in \MY_1$. The columns of $\MY$ are $C_j = \{\MY_{i,j} \in \MY : i \in \MY_1 \}$ for each $j \in \MY_1$.

We define the hypergraphs 
\[
\Delta^t = \{B \subseteq \MY : |B| = t \} \textrm{ and } 
\Lambda^s = \bigcup_{j \in \MY_2} \{B \subseteq C_j : |B| = s \} 
%\Lambda_t = \bigcup_{i \in \MY_1} \{B \subseteq R_i : |B| = t \}
\]
which will be used to describe the minimal prime components of the ideal $J_{\MC}$.
The ideal $J_{\MC}$ is a determinantal hyperedge ideal. To be precise, $J_{\MC}=J_{\MX,\Delta^{s,t}}$, where 
\[
\Delta^{s,t} = \Lambda^s \cup \bigcup_{i \in \MY_1} \{B \subseteq R_i : |B| = t \}.
\]

%%%%%%%%%%%%%%%%%%%%%%%%%%%%%%%%%%%%%%%%%%%%%%%%%%%%%%%%%%%%%%%%%
\section{Main results}
\label{sec:main-results}

Let $\Delta=\Delta^{s,t}$. We begin by defining the ideals which will appear in the prime decomposition of $J_{\MC}$.
\begin{Definition}
  \begin{enumerate}
  \item Let $S$ be a subset of $\MY=[k\ell]$.
    We define the ideal
    $$I_S = \la p_{i,j} : i \in \{1, \dots, \ell\}, j \in S \ra + J_{\MX,\Lambda^s}.$$
  \item Define the ideal $I_{0} = J_{\MX,\Lambda^s \cup \Delta^{t}}$.
  %, where $\Delta^{t} = \big\{ B \subseteq \MY : |B| = t \big\}$.
  \end{enumerate}
\end{Definition}

\begin{Example}\label{example:k2l3s2td3}
Let $k = 2, \ell = 3, s = 2$ and $t = d = 3$.
Then $\MY = 
\begin{pmatrix}
1   &3  &5  \\
2   &4  &6
\end{pmatrix}.$ We have,
\[
\Delta = \{12, 34, 56, 135,246\},
\quad\text{and}\quad P= 
\begin{pmatrix}
p_{1,1}     &p_{1,2}    &p_{1,3}   &p_{1,4}    &p_{1,5}    &p_{1,6}      \\
p_{2,1}     &p_{2,2}    &p_{2,3}   &p_{2,4}    &p_{2,5}    &p_{2,6}     \\
p_{3,1}     &p_{3,2}    &p_{3,3}   &p_{3,4}    &p_{3,5}    &p_{3,6}       \\
\end{pmatrix}.  
\]
The corresponding hyperedge ideal is
\[J_{[3],\Delta} =\left\la [12|12],[13|12],[23|12],[12|34],[13|34],[23|34],[12|56],[13|56],[23|56],[135],[246] \right\ra.
\]
We also have the subgraph $\Lambda^s = \Lambda^2 = \{12, 34, 56\}$ of $\Delta$ and so, 
\[
J_{[3],\Lambda^2} = \left\la [12|12],[13|12],[23|12],[12|34],[13|34],[23|34],[12|56],[13|56],[23|56] \right\ra \subset J_{[3], \Delta}.
\]
Consider $S = \{1, 4 \}$.  Then
$I_S = I_{14} = 
\left\la p_{1,1}, p_{2,1}, p_{3,1}, p_{1,4}, p_{2,4}, p_{3,4}
\right\ra + J_{[3], \Lambda^2}$,
 where 
$J_{[3], \Lambda^2} = \la [12|56], [13|56], [23|56] \ra$.
The edges of the hypergraph $\Delta^t$ are all $3$-subsets of $[6]$. So the ideal $I_0$ is generated by all $3$-minors of $P$ along with all $2$-minors in $J_{[3],\Lambda^2}$.
\end{Example}

We assume that $2\leq k\leq \ell \leq d, s = 2, t = \ell$. We are mostly interested in the ideals $I_{S}$ when $S$ belongs to
\begin{equation*}
  \LL = \big\{\{\MY_{i_1, j_1}, \dots, \MY_{i_k, j_k}\} : \lv \{i_1,\dots i_k\} \rv = k, \lv \{j_1,\dots,j_k \} \rv \ge 2 \big\}.
\end{equation*}
In this case, $J_{\MX,\Delta^{2,t}}\subseteq I_{S}$.  Moreover, $J_{\MX,\Delta^{2,t}}\subseteq I_{0}$, since
$\Delta^{2,t} \subseteq \Lambda^2\cup\Delta^{t}$.

It is easy to write down a minimal generating set of the ideal $I_0$. %  corresponding to $S=\emptyset$. 
\begin{Lemma}\label{lemma:Generating}
Let $\MX = [d]$. Then $I_0$ is minimally generated by the following set of minors:
\begin{multline*}
G(I_0) = 
\big\{[A \mid B]: A\subset[d], B\in\Delta, |A|=|B|=2 \big\}
\\
\cup \big\{[A \mid B] : A\subset[d], B\subset[k\ell], |A|=|B|=t, \lv B \cap C_j \rv \le 1\ \text{for each } j  \big\}.
\end{multline*}
\end{Lemma}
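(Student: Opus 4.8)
The plan is to show two things: first that $G(I_0)$ generates $I_0$, and second that no element of $G(I_0)$ lies in the ideal generated by the others. For the generation statement, recall that by definition $I_0 = J_{\Delta^{2,C}\cup\Delta^t}$, so $I_0$ is generated by all minors $[A\mid B]$ with $B\in\Delta^{2,C}\cup\Delta^t$ and $|A|=|B|$. The hyperedges $B$ of size $2$ are exactly those in $\Delta^{2,C}$ (the $2$-subsets of columns $C_j$), which matches $\Delta$ restricted to size $2$ and gives the first set in $G(I_0)$. The hyperedges of size $t=\ell$ are the $t$-subsets of $\MY=[k\ell]$; so $I_0$ is generated by these together with the $2$-minors above. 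The only point to argue is that among the $t$-minors, those coming from a $t$-set $B$ with $|B\cap C_j|\ge 2$ for some $j$ are already in the ideal generated by the $2$-minors: indeed, if $B$ contains two elements of a single column $C_j$, then the submatrix $P[A\mid B]$ has two columns among which all $2\times 2$ minors vanish modulo $J_{\Delta^{2,C}}$, hence its determinant lies in $J_{\Delta^{2,C}}$ by Laplace/cofactor expansion along those two columns (every term is a multiple of a $2$-minor from those two columns). This reduces the generating set to the two displayed families.

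For minimality, I will use the standard grading argument for determinantal-type ideals: $I_0$ is a homogeneous ideal, and moreover it is multigraded — $R$ carries a $\ZZ^d\times\ZZ^{k\ell}$-multigrading (row degrees and column degrees), and each minor $[A\mid B]$ is multihomogeneous with multidegree supported exactly on the rows $A$ and columns $B$. A minimal generator cannot be redundant if its multidegree is not a sum of a nonunit monomial times the multidegree of another generator; equivalently, I check that for each generator $g=[A\mid B]\in G(I_0)$, there is no other generator $g'=[A'\mid B']\in G(I_0)$ with $A'\subseteq A$ and $B'\subseteq B$ (since any monomial multiple of $g'$ of the right total degree and with support inside $A\times B$ would need $A'\subseteq A$, $B'\subseteq B$). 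For the $2$-minors this is immediate: no proper sub-minor exists. For a $t$-minor $[A\mid B]$ with $|B\cap C_j|\le 1$ for all $j$: a containing $2$-minor would need $B'\subseteq B$ with $B'\in\Delta^{2,C}$, i.e. $B'$ a $2$-subset of some $C_j$, impossible since $|B\cap C_j|\le 1$; and a containing $t$-minor $[A'\mid B']$ with $B'\subsetneq B$ is impossible since $|B'|=t=|B|$ forces $B'=B$ and then $A'=A$. Hence no generator lies in the span of the others together with monomials, so $G(I_0)$ is a minimal generating set.

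The one step that needs genuine care, rather than being routine, is the reduction argument in the first paragraph — that a $t$-minor with a repeated column-block really does lie in $J_{\Delta^{2,C}}$ and not merely in $I_0$ (which would make the claim circular). The clean way is the cofactor expansion: fix two columns $c,c'$ of $B$ lying in the same $C_j$; expanding $\det P[A\mid B]$ simultaneously along columns $c$ and $c'$ (a generalized Laplace expansion) writes it as a $K[p]$-linear combination of the $2\times2$ minors $P[\{a,a'\}\mid\{c,c'\}]$, each of which is a generator of $J_{\Delta^{2,C}}$. I expect verifying the precise form of this expansion and confirming it uses only $\Delta^{2,C}$-generators (never the $t$-generators) to be the main technical obstacle; everything else is bookkeeping with the multigrading.
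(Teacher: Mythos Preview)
Your proof is correct. The paper does not actually supply a proof of this lemma---it is stated immediately after the remark ``It is easy to write down a minimal generating set of the ideal $I_0$'' and then used without further justification---so your argument fills in exactly the details the authors omit. The generation step via generalized Laplace expansion along two columns in the same $C_j$ is the natural argument (and is essentially the same computation the paper uses later, inside the proof of Proposition~\ref{prop:intersection}, to show that certain $\ell$-minors vanish). Your minimality argument via the $\ZZ^d\times\ZZ^{k\ell}$ multigrading is clean and complete: the key observation, which you state correctly, is that for each $g=[A\mid B]\in G(I_0)$ the \emph{only} element of $G(I_0)$ whose row/column support is contained in $(A,B)$ is $g$ itself, so no $K[p]$-linear combination of the remaining generators can land in the multidegree of~$g$.
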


Let $\plex$ be the lexicographic term order induced by the natural order of the variables:
\[ 
p_{u,i}>p_{v,m}\quad\text{iff}\quad u<v,\text{\ or\ } u=v \ {\rm and\ } i<m.
 \]
\begin{Theorem}\label{thm:Ollie}
  \begin{enumerate}
  \item The set $G(I_0)$ forms a Gr\"obner basis for $I_0$ with respect to
    $\plex$.
  \item For any $S\in\ML$, the generators of $I_{S}$ form a Gr\"obner basis with respect to $\plex$.
  \end{enumerate}
%The ideal $I_0$ is radical and prime. 
\end{Theorem}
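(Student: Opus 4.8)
The plan is to use a standard Gr\"obner-basis argument based on Buchberger's criterion, exploiting the combinatorial structure of $\Delta^{2,t}$. First I would set up the monomial bookkeeping: for each generator $[A\mid B]$ with $|A|=|B|=2$ the $\plex$-leading term is the product of the ``main diagonal'' entries (with the smaller row index paired with the smaller column index of $C_j$), and for each generator $[A\mid B]$ with $|A|=|B|=t$ and $|B\cap C_j|\le 1$ the leading term is again the product along the diagonal of $A$ against $B$ after sorting both index sets increasingly. The key point to record is that because every $B$ in the large-minor family meets each column $C_j$ in at most one vertex, the column indices occurring in $B$ are spread across distinct columns, and this is exactly what will make the $S$-polynomial reductions behave like those for classical minors of a generic matrix.

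The main work is to check that all $S$-polynomials reduce to zero. I would organise the cases by the types of the two generators involved. The pair of a $2$-minor with a $2$-minor is handled exactly as in the theory of $2\times 2$ minors of a matrix (or of binomial/determinantal hyperedge ideals), so that case only requires citing the fact that the $2$-minors of $P_{C_j}$ for a fixed $j$ form a Gr\"obner basis, together with the observation that $2$-minors from different columns $C_j, C_{j'}$ have coprime leading terms and hence give rise to $S$-polynomials that reduce to zero by Buchberger's first criterion. The pair consisting of two $t$-minors $[A\mid B]$ and $[A'\mid B']$: here I would invoke the classical result that the maximal (and all) minors of a generic matrix form a Gr\"obner basis under a diagonal term order (Sturmfels, Bernstein--Zelevinsky, etc.); the only extra input is that each $B, B'$ has $|B\cap C_j|\le1$, which means the relevant submatrices are submatrices of a generic matrix, so their minors still form a Gr\"obner basis, and the Pl\"ucker-type straightening relations used in that proof only involve minors whose column sets are again ``at most one per $C_j$,'' hence lie in $G(I_0)$ (for part (1)) or are already zero in $R/I_S$ because a forbidden column pair meets $S$ (for part (2)). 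The genuinely new case is the mixed one: a $2$-minor $[A\mid B]$ with $B\subseteq C_j$ against a $t$-minor $[A'\mid B']$. If the leading terms are coprime we are done by Buchberger's first criterion, so assume they share a variable $p_{u,v}$. I would then expand the $S$-polynomial and perform Laplace-type reductions: subtracting suitable multiples of other $2$-minors (to ``move'' the shared entry out of the column $C_j$) and of $t$-minors with admissible column sets, one rewrites the $S$-polynomial as a combination of elements of $G(I_0)$ with smaller leading term. The combinatorial reason this closes up is that $t=\ell$ forces $B'$ to contain a vertex from all but one of the $\ell$ columns, so whenever $B'$ already meets $C_j$ the extra vertex from $B$ creates a $2\times2$ minor inside $C_j$ which is itself a generator, and the remaining $(t)$-minor obtained by a column swap still meets each $C_j$ at most once.

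For part (2) the argument is the same Buchberger computation carried out in the quotient ring $R/\langle p_{i,j}: i\in[\ell], j\in S\rangle$. Setting the variables $p_{i,j}$ with $j\in S$ to zero kills many terms; one checks that the leading terms of the surviving generators of $I_S$ are unchanged (the diagonal term of each generator avoids the killed variables precisely because $S\in\ML$ guarantees $|\{i_1,\dots,i_k\}|=k$ and $|\{j_1,\dots,j_k\}|\ge 2$, so the vanishing pattern is ``generic enough''), and then every $S$-polynomial computed above either was already a multiple of a killed variable or reduces to zero by the same straightening relations, now read modulo the killed variables.

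\textbf{Expected main obstacle.} The delicate step is the mixed case $S$-polynomial between a column $2$-minor and a large $t$-minor with overlapping leading terms: one has to produce an explicit cofactor expression writing this $S$-polynomial as a $G(I_0)$-combination whose leading terms are all strictly smaller, and to verify that each intermediate minor that appears still satisfies the admissibility condition $|B\cap C_j|\le1$ (equivalently is not accidentally a forbidden ``two-in-a-column'' $t$-minor). Handling this uniformly will likely require a short lemma on how Laplace expansion along the shared row interacts with the column-partition structure, plus a careful use of the hypothesis $t=\ell$; everything else is an adaptation of known Gr\"obner-basis results for minors of generic matrices.
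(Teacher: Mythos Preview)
Your overall architecture for part~(1)---Buchberger's criterion, three cases by the degrees of the two generators---matches the paper's proof, and your description of the mixed case is in the right spirit (the paper writes down explicit Laplace-type identities and checks term by term that they give a reduction). There are, however, two genuine gaps.

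First, for part~(2) you seem to have misread the definition of $I_S$. The ideal $I_S$ is generated only by the variables in the columns indexed by~$S$ together with the $2$-minors of each block $P_{C_j}$; it contains no $t$-minors at all. So there is nothing to ``survive'' and no straightening to redo modulo the killed variables. The paper's argument is therefore essentially one line: $I_S$ is a sum of a monomial ideal and several $2$-minor ideals supported on pairwise disjoint sets of variables (the blocks $P_{C_j}$), and it is standard that such a sum has the concatenation of the individual Gr\"obner bases as a Gr\"obner basis. Your proposed reduction in a quotient ring is unnecessary and rests on a mistaken picture of the generators.

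Second, in your $t$-minor/$t$-minor case for part~(1) you assert that the Pl\"ucker-type straightening ``only involves minors whose column sets are again `at most one per $C_j$'\,''. This is not true in general: if $B$ and $B'$ are both admissible but $B\cup B'$ contains two columns from the same $C_j$, the classical reduction of $S([A|B],[A'|B'])$ will typically produce $t$-minors whose column set meets that $C_j$ twice, i.e.\ minors \emph{not} in $G(I_0)$. The paper closes this gap differently: it observes that any such ``inadmissible'' $t$-minor can itself be expanded (Laplace along the two columns in the same $C_j$) as a $K[p_{i,j}]$-linear combination of $2$-minors from $P_{C_j}$, which \emph{are} in $G(I_0)$. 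So the reduction does go through, but via both $t$-minors and $2$-minors, not via admissible $t$-minors alone. You should replace your claim with this observation.
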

\begin{Theorem}\label{thm:J0}
  The ideals $I_0$ and $I_S$ are all prime.
\end{Theorem}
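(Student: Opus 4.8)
\emph{Overview.} The plan is to deduce radicality of $I_{0}$ and of every $I_{S}$, $S\in\LL$, from the Gr\"obner bases in Theorem~\ref{thm:Ollie}, then to prove primality of each $I_{S}$ by realizing $R/I_{S}$ as a tensor product of ordinary determinantal rings, and finally to prove primality of $I_{0}$ by exhibiting an explicit irreducible parametrization of its variety.

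\emph{Radicality.} By Theorem~\ref{thm:Ollie} the displayed generators of $I_{0}$ and of $I_{S}$ form Gr\"obner bases with respect to $\plex$. The $\plex$-leading term of a variable $p_{i,j}$ is $p_{i,j}$, and the $\plex$-leading term of a minor $[a_{1},\dots,a_{t}\mid b_{1},\dots,b_{t}]$ with $a_{1}<\dots<a_{t}$ and $b_{1}<\dots<b_{t}$ is the main-diagonal monomial $p_{a_{1},b_{1}}\cdots p_{a_{t},b_{t}}$, which is squarefree. Hence $\init(I_{0})$ and $\init(I_{S})$ are squarefree monomial ideals, so they are radical, and therefore $I_{0}$ and $I_{S}$ are radical. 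A Gr\"obner basis remains one under field extension, so the same computation shows $I_{0}\otimes_{K}\bar{K}$ is radical; consequently $I_{0}\bar{K}=I\bigl(V(I_{0}\bar K)\bigr)$ by the Nullstellensatz, and since $I_{0}=I_{0}\bar K\cap R$ is the contraction of this ideal, it will be enough to prove that $V(I_{0})\subseteq\bar K^{\MX\times\MY}$ is irreducible.

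\emph{Primality of $I_{S}$.} The first summand of $I_{S}$ is generated by variables, and eliminating them identifies $R/I_{S}$ with $R'/\overline{J_{\Delta^{2,C}}}$, where $R'$ is the polynomial ring on the surviving variables. The column blocks $C_{j}$ involve pairwise disjoint variables, and for each $j$ the eliminated columns form a proper (possibly empty) subset of $C_{j}$; this is exactly where the condition $|\{j_{1},\dots,j_{k}\}|\ge 2$ is used, since it forces $|S\cap C_{j}|\le k-1$. A $2$-minor of $P_{C_{j}}$ that involves an eliminated column maps to $0$, so the image of $I_{2}(P_{C_{j}})$ equals the ideal of $2$-minors of the generic submatrix of $P_{C_{j}}$ on its surviving columns, and
\[
  R/I_{S}\ \cong\ \bigotimes_{j}\ K\bigl[\text{surviving columns of }P_{C_{j}}\bigr]\big/I_{2},
\]
a tensor product over $K$ of generic determinantal rings (polynomial rings when a single column survives). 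Since the ideal of $t$-minors of a generic matrix is prime over every field, each factor is a geometrically integral $K$-algebra, so the tensor product is a domain and $I_{S}$ is prime.

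\emph{Primality of $I_{0}$.} Over $\bar K$ one has $V(I_{0})=\{P:\rank P_{C_{j}}\le 1\text{ for all }j,\ \rank P\le \ell-1\}$. Consider the morphism
\[
  \psi\colon\ \{U\in\bar K^{d\times\ell}:\rank U\le\ell-1\}\times(\bar K^{k})^{\ell}\ \longrightarrow\ \bar K^{\MX\times\MY},\qquad (U,(v_{j})_{j})\ \longmapsto\ P,\quad P_{C_{j}}=U_{\cdot,j}\,v_{j}^{\top}.
\]
Its source is the product of a generic determinantal variety with an affine space, hence irreducible, so $\overline{\operatorname{im}\psi}$ is irreducible. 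One checks $\operatorname{im}\psi=V(I_{0})$: for ``$\subseteq$'' every $P_{C_{j}}$ has rank $\le 1$ and every column of $P$ lies in the column span of $U$, so $\rank P\le\rank U\le\ell-1$; for ``$\supseteq$'', given $P\in V(I_{0})$ write $P_{C_{j}}=u_{j}v_{j}^{\top}$ with $u_{j}=0$ whenever $P_{C_{j}}=0$, note $u_{j}\ne 0$ forces $v_{j}\ne0$, so the column span of $P$ equals $\operatorname{span}\{u_{1},\dots,u_{\ell}\}$, whence $U=[u_{1}\mid\dots\mid u_{\ell}]$ has $\rank U=\rank P\le\ell-1$ and $\psi(U,(v_{j})_{j})=P$. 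Thus $V(I_{0})$ is irreducible and, combined with the radicality already established, $I_{0}$ is prime. The hard part is precisely the set-theoretic identity $\operatorname{im}\psi=V(I_{0})$ — that is, that modulo the block relations the $\ell$-minors of $P$ cut out exactly the condition $\rank U\le \ell-1$ and impose nothing stronger — together with the care needed to pass between $K$ and $\bar K$ so that irreducibility of the variety upgrades to primality of the ideal over an arbitrary ground field.
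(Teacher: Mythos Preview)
Your argument is correct, and for $I_{S}$ it is essentially identical to the paper's (the paper simply says ``$I_{S}$ is prime since it is generated by a collection of variables and 2-minors which arise from distinct columns of~$P$''; you have unpacked this into the tensor-product formulation).

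For $I_{0}$, however, your route is genuinely different. The paper proceeds by induction on~$\ell$: it shows (Lemma~\ref{localise vars}) that $p_{d,1}$ is a non-zerodivisor modulo~$I_{0}$, then applies a Localization Lemma (Lemma~\ref{rules}) to identify $(R/I_{0})_{p_{d,1}}$ with the localization of a ring of the same shape but with parameters $(k,\ell-1,d,s,t-1)$ (Lemma~\ref{localise ideal}); the base case $\ell=2$ is the ordinary $2$-minor ideal of a generic matrix (Lemma~\ref{base case}). Your argument instead produces a single surjective parametrization $\psi$ whose source is the product of the determinantal variety $\{U:\rank U\le\ell-1\}$ with an affine space, and reads off irreducibility of $V(I_{0})$ directly. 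Your approach is shorter, more geometric, and yields as a by-product an explicit description of the generic point of $V(I_{0})$; the paper's inductive localization stays purely within commutative algebra and avoids any discussion of base change to~$\bar K$, at the cost of introducing several auxiliary lemmas. Both arguments ultimately rest on the classical fact that generic determinantal ideals are prime, and both use the squarefree Gr\"obner basis from Theorem~\ref{thm:Ollie} (you for radicality over~$\bar K$, the paper for the non-zerodivisor statement).
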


\begin{Theorem}\label{thm:intersection}
  A minimal primary decomposition of the radical of $\ID$ is given by
  \begin{equation*}
    \sqrt{\ID}= I_0 \cap
    \bigcap_{S \in \LL} I_S.
  \end{equation*}
\end{Theorem}

\begin{Corollary}\label{cor:number}
  The number of minimal prime components of $\sqrt{\ID}$ is ${\ell}^{k} - \ell + 1$.
\end{Corollary}
\begin{Proposition}\label{cor:dim}
  The dimension of $R / I_S$ for $S \in \ML$ is $\ell(k + d - 1) - k$. The dimension of $R / I_0$ is $\ell (k + d) - d - 1$.
\end{Proposition}

The proof of these statements are in Section~\ref{sec:proofs}.  We conclude this section with examples and a comparison of our result to similar results about determinantal hyperedge ideals and the CI ideals of the intersection axiom.

\begin{Example}
  \label{ex:s2td3}
  Let us consider some specific cases.
  \begin{enumerate}
  \item Let $k = 2, \ell = 3, s = 2$ and $t = d = 3$. We continue Example~\ref{example:k2l3s2td3}
    and recall,
    $$\Delta = \{12, 34, 56, 135, 246\},\quad\text{and}\quad P= 
  \begin{pmatrix}
    p_{1,1}     &p_{1,2}    &p_{1,3}   &p_{1,4}    &p_{1,5}    &p_{1,6}      \\
    p_{2,1}     &p_{2,2}    &p_{2,3}   &p_{2,4}    &p_{2,5}    &p_{2,6}     \\
    p_{3,1}     &p_{3,2}    &p_{3,3}   &p_{3,4}    &p_{3,5}    &p_{3,6}       \\
\end{pmatrix}.  
    $$
   The corresponding hyperedge ideal is
    $$J_{[3],\Delta} =\left\la [12|12],[13|12],[23|12],[12|34],[13|34],[23|34],[12|56],[13|56],[23|56],[135],[246] \right\ra.$$
    $J_{[3],\Delta}$ has $7$ prime components corresponding to the sets: $14, 16, 32, 36, 52$ and $54$ along with the ideal $I_0$.
  \item % Let $k = 3$ and $\ell = 3$. Then we have,
    Now we increase $k$ to~$3$.  Then
    \begin{gather*}
      \MY= 
      \begin{pmatrix}
        1   &4  &7  \\
        2   &5  &8  \\
        3   &6  &9  
      \end{pmatrix}, \qquad
      \Delta = \{12, 13, 23, 45, 46, 56, 78, 79, 89, 147, 258, 369\}, \\
      P= 
   \begin{pmatrix}
    p_{1,1}     &p_{1,2}    &p_{1,3}   &p_{1,4}    &p_{1,5}    &p_{1,6}    &p_{1,7}    &p_{1,8}  &p_{1,9}    \\
    p_{2,1}     &p_{2,2}    &p_{2,3}   &p_{2,4}    &p_{2,5}    &p_{2,6}    &p_{2,7}    &p_{2,8}  &p_{2,9}   \\
    p_{3,1}     &p_{3,2}    &p_{3,3}   &p_{3,4}    &p_{3,5}    &p_{3,6}    &p_{3,7}    &p_{3,8}  &p_{3,9}  \\
\end{pmatrix}.
    \end{gather*}
%     $$\MY= 
%     \begin{pmatrix}
%     1   &4  &7  \\
%     2   &5  &8  \\
%     3   &6  &9  
%     \end{pmatrix}.
%     $$
%     Let $s = 2$ and $t = d = 3$. Then we have the hypergraph
%     $$\Delta = \{12, 13, 23, 45, 46, 56, 78, 79, 89, 147, 258, 369\},$$
%     and the matrix
%     $$P= 
%    \begin{pmatrix}
%     p_{11}     &p_{12}    &p_{13}   &p_{14}    &p_{15}    &p_{16}    &p_{17}    &p_{18}  &p_{19}    \\
%     p_{21}     &p_{22}    &p_{23}   &p_{24}    &p_{25}    &p_{26}    &p_{27}    &p_{28}  &p_{29}   \\
%     p_{31}     &p_{32}    &p_{33}   &p_{34}    &p_{35}    &p_{36}    &p_{37}    &p_{38}  &p_{39}  \\
% \end{pmatrix}.  
%     $$
    $J_{[3],\Delta}$ has $25$ prime components which fall into three symmetry classes:
    \smallskip
    \begin{center}
      \begin{tabular}{ccccc}
        \toprule
        Type    & Representative      &Occurrences   &Number of generators     &Codimension \\
        \midrule
        0       &$I_0$        &1            &54             &14     \\
        1       &$I_{159}$    &6            &18             &12     \\
        2       &$I_{126}$    &18           &21             &12     \\
        \bottomrule
      \end{tabular}
    \end{center}
  \end{enumerate}
\end{Example}

\begin{Remark}
The ideals $\ID$ do not behave similarly to the determinantal facet ideals studied in \cite{Fatemeh}. For example, the Betti numbers of $\ID$ and $I_0$ are not equal to those of their initial ideals.  This is in contrast to determinantal facet ideals, see Corollary~5.6 in \cite{Fatemeh2}.
\end{Remark}

\begin{Remark}[Statistical interpretation of prime components]
The component~$I_{0}$ describes all joint distributions with full support (i.e.\ without zero entries).  Algebraically, this is equivalent to $I_{0} = \sqrt{\ID} : (\prod_{i,j}p_{i,j})^{\infty}$.  For CI ideals, the component(s) that describe the probability distributions with full support are of special importance~\cite{Sturmfels02:Solving_polynomial_equations}, because it corresponds to the statistical case without structural zeros.
%%% Chapter 8.2
%
$I_{0}$ has a clear probabilistic interpretation:
 % \begin{equation*}
   $I_{0} = J_{\ind{X}{Y_{1}}[Y_{2}], \ind{X}{\{Y_{1},Y_{2}\}}[H_{2}]},$
  %\end{equation*}
  where $|\MH_{2}|=2$.  This can be compared with the intersection axiom, which states:
  \emph{If $\ind{X}{Y_{1}}[Y_{2}]$ and $\ind{X}{Y_{2}}[Y_{1}]$ and if the joint distribution of $X$, $Y_{1}$ and $Y_{2}$ has full support, then $\ind{X}{\{Y_{1},Y_{2}\}}$.}%

  In all other prime components, the zeros of the joint distribution matrix are such that each determinant implied by the CI statement $\ind{X}{Y_{1}}[\{Y_{2},H_{2}\}]$ involves a zero column.  Thus, each such determinant vanishes trivially, and only the determinants implied by $\ind{X}{Y_{1}}[Y_{2}]$ prevail as generators.
\end{Remark}
%\vspace{-.3cm}
\section{Proofs of main results}
\label{sec:proofs}

\begin{proof}[{\bf{Proof of Theorem~\ref{thm:Ollie}}}]
  Statement (2) is proved as follows. $I_{S}$ is a sum of a monomial ideal and determinantal ideals.  Each determinantal ideal corresponds to a column $C_{j}$, $j\in[\ell]$, and is generated by all $2$-minors of~$P_{C_{j}\setminus S}$.  The 2-minors form a Gr\"obner basis of this determinantal ideal, see \cite{Fatemeh2}.  Since the sets of variables that appear in the generators of each of the ideals in the sum are disjoint, the generators form a Gr\"obner basis for~$I_S$.
  
  We prove Statement~(1)
using Buchberger's algorithm. Take two elements $g_1$ and $g_2$ in $G(I_0)$. We show that the $S$-polynomial $S(g_1,g_2)$ reduces to zero.  Assuming that the initial terms of $g_1$ and $g_2$ are not coprime, we consider the following cases:

\smallskip

\textbf{Case 1.} $\deg(g_1) = \deg(g_2) = 2$. Since the initials terms of $g_1$ and $g_2$ are not coprime, we deduce that $g_1$ and $g_2$ belong to a submatrix $P_{C_j}$ for some~$j$.
It is a classical result that the $2$-minors of $P_{C_j}$ form a Gr\"obner basis with respect to $\plex$ for the ideal they generate.  Thus, $S(g_1,g_2)$ reduces to zero via the same reduction as in the classical case. 

\smallskip

\textbf{Case 2.} $\deg(g_1) = \deg(g_2) = t$. We use the same classical result, that is, the set of $t$-minors of $P$ is a Gr\"obner basis for the ideal they generate. So the $S$-polynomial has a reduction to zero by $t$-minors. Note that every $t$-minor either occurs in $G(I_0)$ or can be written as a $\KK[p_{i,j}]$-linear sum of the 2-minors in $G(I_0)$. So the reduction of the $S$-polynomial $S(g_1, g_2)$ can be written in terms of minors in $G(I_0)$.

\smallskip

\textbf{Case 3.} $\deg(g_1) = 2$ and $\deg(g_2) = t$. Let us write $g_1 = [i_1 i_2 \mid j_1 j_2]$ and $g_2 = [I \mid J]$.
Take the submatrix $\tilde{P}$ of $P$ with columns $J \cup \{j_1, j_2\}$ and rows $I \cup \{i_1, i_2\}$. Assume that $n = \lv \{i_1, i_2 \} \cup I\rv $ and $m = \lv \{j_1, j_2 \} \cup J \rv$. We relabel the rows of $\tilde{P}$ with $[n]$ and its columns with $[m]$.
The indices $j_1, j_2$ belong to the column ${C_i}$ for some $i$, hence by the definition of $G(I_0)$ there is at most one $j \in J$ with $j \in {C_i}$.  Denote by $j_u$ the unique element of $\{j_1, j_2 \} \cap J$. Define $\I = \{i_1, i_2 \} \cap I$. We have that $\lv \I \rv \le 2$.

\smallskip

We now study all four possible cases for $u \in \{1, 2\}$ and $\lv \I \rv \in \{ 1, 2\}$ depicted in Figure~\ref{fig:thm_Ollie_cases}.

\smallskip

\textbf{Case 3.i.} $u = 1$ and $\lv \I \rv = 1$.
We have $i_1 = j_1$, because $p_{i_1, j_1}$ divides the initial term of~$g_2$. Let $a := i_1 = j_1$ and $b := i_2 > a$. Since every element $j \in J$ belongs to a distinct column $C_r$ and the minor $g_1$ is taken from a single column $C_i$, we must have that $j_1, j_2$ are adjacent columns in $\tilde{P}$ (recall the ordering of the set $\MY$ of column indices defined in Section~\ref{sec:determ-hyper-ideals}).
Thus, $j_2 = a+1$. Note that in this case $m = n$.
The following identity can be deduced by expanding the determinants on the left hand side along the columns $a+1$ and $a$, respectively:
\begin{multline} \label{eqn:spoly1}
% \begin{aligned}
    p_{b,a}[[n] \backslash \{b\} \mid [n] \backslash \{a\}] - p_{b,a+1}[[n] \backslash \{b\} \mid [n] \backslash \{a+1\}] \\
    = \sum_{i < b} (-1)^{a+i-1}[ib \mid a(a+1)][[n] \backslash \{i,b\} \mid [n] \backslash \{a,a+1\}] \\
    + \sum_{i > b} (-1)^{a+i-1}[bi \mid a(a+1)][[n] \backslash \{b,i\} \mid [n] \backslash \{a,a+1\}]
\end{multline}

We now check that rearranging the above equation gives a reduction of $S(g_1, g_2)$ to zero. First, the coefficients of $g_1$ and $g_2$ agree with those from the $S$-polynomial. Second, the initial terms of all other terms appearing in the expression are smaller than the leading term of the $S$-polynomial.

Note that $g_2 = [[n] \backslash \{b\} \mid [n] \backslash \{a+1\}]$ appears on the left hand side of~\eqref{eqn:spoly1} with coefficient $-p_{b,a+1}$ as expected. On the other hand, $g_1 = [ab \mid a(a+1)]$ appears on the right hand side % of the equation
with coefficient $(-1)^{2a-1} \init(h)$ where $h = [[n] \backslash \{a,b\} \mid [n] \backslash \{a,a+1\}]$, as required.

It remains to check that all terms appearing in \eqref{eqn:spoly1} which do not belong to the $S$-polynomial, are less than the initial term of $S(g_{1},g_{2})$. To do this we use the table in Appendix~\ref{tab:ini-3i}. First we calculate explicitly the initial term of the $S(g_1, g_2)$ which is either the second largest term of $\init(h)g_1$ or $p_{b,a+1}g_2 $ where $h = [[n] \backslash \{a,b\} \mid [n] \backslash \{a,a+1\}]$.  The initial term of $S(g_1, g_2)$ depends upon the values of $a$ and $b$ so we take the necessary cases in the table. Second we see from the table that every other term in \eqref{eqn:spoly1} is less than or equal to the initial term of $S(g_1, g_2)$.

\smallskip

\textbf{Case 3.ii.} $u = 2$ and  $\lv \I \rv = 1$.
We have $i_2 = j_2$, because $p_{i_2, j_2}$ divides the initial term of $g_2$. Let $a + 1 := i_2 = j_2$ and $b := i_1 < a + 1$. By the relabelling to $\tilde{P}$ we have that $j_1 = a$. Note that in this case $m = n$.

We now check that rearranging \eqref{eqn:spoly1} from Case 3.i gives a reduction of $S(g_1, g_2)$ to zero. Note that the value of $b$ is different in the current case to Case 3.i; however, the same relation holds by the same proof.
%%% JR: Repetition, so I commented out:
% We proceed by showing that the coefficients of $g_1$ and $g_2$ agree with those from the $S$-polynomial. Then we show that the all other terms appearing in the expression are smaller than leading term in the $S$-polynomial.

Note that $g_2 = [[n] \backslash \{b\} \mid [n] \backslash \{a\}]$ appears on the left hand side of~\eqref{eqn:spoly1} with coefficient $p_{b,a}$ as expected. On the other hand, $g_1 = [b(a+1) \mid a(a+1)]$ appears on the right hand side with coefficient $(-1)^{2a} \init(h)$ where $h = [[n] \backslash \{b,a+1\} \mid [n] \backslash \{a,a+1\}]$, as required.

Next we check that all other terms appearing in \eqref{eqn:spoly1} are smaller than the initial term of $S(g_1,g_2)$. We do this by referring to the table in Appendix~\ref{tab:ini-3ii}. First we calculate explicitly the initial term of the $S(g_1, g_2)$ which is either the second largest term of $\init(h)g_1$ or $p_{b,a}g_2 $ where $h = [[n] \backslash \{b, a+1\} \mid [n] \backslash \{a,a+1\}]$. Second we see from the table that every other term in \eqref{eqn:spoly1} is less than or equal to the initial term of $S(g_1, g_2)$.

\smallskip

\textbf{Case 3.iii.} $u= 1$ and  $\lv \I \rv = 2$.
We have $i_1 = j_1$, because $p_{i_1, j_1}$ divides the initial term of $g_2$. Let $a := i_1 = j_1$ and $b := i_2 > a$. By the relabelling to $\tilde{P}$ we have that $j_2 = a+1$. Note that in this case $m = n+1$.
The following identity can be deduced by expanding the determinants on the left hand side along the columns $a+1$ and $a$, respectively:
\begin{multline} \label{eqn:spoly3}
% \begin{aligned}
    p_{b,a}[[n] \mid [m] \backslash \{a\}] - p_{b,a+1}[[n] \mid [m] \backslash \{a+1\}] \\
    = \sum_{i < b} (-1)^{a+i-1}[ib \mid a(a+1)][[n] \backslash \{i\} \mid [m] \backslash \{a,a+1\}] \\
    + \sum_{i > b} (-1)^{a+i}[bi \mid a(a+1)][[n] \backslash \{i\} \mid [m] \backslash \{a,a+1\}]
% \end{aligned}
\end{multline}

We now check that rearranging the above equation gives a reduction of $S(g_1, g_2)$ to zero.
% First, the coefficients of $g_1$ and $g_2$ agree with those from the $S$-polynomial. Second, all other terms appearing in the expression are smaller than the leading term of the $S$-polynomial.
%
Note that $g_2 = [[n] \mid [m] \backslash \{a+1\}]$ appears on the left hand side of~\eqref{eqn:spoly3} with coefficient $p_{b,a+1}$ as expected. On the other hand, $g_1 = [ab \mid a(a+1)]$ appears on the right hand side with coefficient $(-1)^{2a} \init(h)$ where $h = [[n] \backslash \{a\} \mid [m] \backslash \{a,a+1\}]$, as required.

Next we check that all other terms appearing in \eqref{eqn:spoly3} are smaller than the leading term of $S(g_1,g_2)$. This can be seen from the table in Appendix~\ref{tab:ini-3iii}. First we calculate explicitly the initial term of the $S(g_1, g_2)$ which is either the second largest term of $\init(h)g_1$ or $p_{b,a+1}g_2 $ where $h = [[n] \backslash \{a\} \mid [m] \backslash \{a,a+1\}]$. Second we see from the table that every other term in \eqref{eqn:spoly1} is less than or equal to the initial term of $S(g_1, g_2)$.

\smallskip

\begin{figure}
  \begin{center}
    \newcommand{\tikzPtilde}[8]{
      % #1,#2 -- row labels
      % #3,#4 -- column labels
      % #5    -- the 'a' row (or 'a+1' row)
      % #6    -- the 'b' row
      % #7    -- the first selected column
      % #8    -- additional width of outer rectangle (0 or 1)
      \node[label=left:$#1$] (ar) at ($(-1,8) - (0,#5)$) {};
      \node (ar') at ($(ar -| 9,0) + (#8,0)$) {};
      \node[label=left:$#2$] (b) at ($(-1,8) - (0,#6)$) {};
      \node (b') at ($(b -| 9,0) + (#8,0)$) {};
      \node[label={[rotate=90,text depth=2ex]right:$#3$}] (ac) at ($(0,9) + (#7,0)$) {};
      \node (ac') at (ac |- 0,-1) {};
      \node[label={[rotate=90,text depth=2ex]right:$#4$}] (a1) at ($(1,9) + (#7,0)$) {};
      \node (a1') at (a1 |- 0,-1) {};

      \draw[black!60!green,thick] (0,8) -- (ar -| ac)
                                  (ar -| a1) -- ($(8,0) + (#8,0)$);

      \draw[fill=white,color=white] ($(b -| 0,0) - (0,0.5) + #8*(0,0.5)$) rectangle ($(b -| 8,0) + (0,0.5) - #8*(0,0.5)$);
      \draw[fill=white,color=white] ($(ar -| 0,0)!0.5cm!(b -| 0,0)$) rectangle ($(ar -| 8,0) + (#8,0)$);

      \fill[white!75!red] (ar -| ac) rectangle (b -| a1);
      \draw[thick] (ar) -- (ar')
                   (b) -- (b')
                   (ac) -- (ac')
                   (a1) -- (a1');
      \draw (0,8) rectangle ($(8,0) + (#8,0)$);
    }
    \begin{tabular}{cc}
      \begin{tikzpicture}[scale=0.27]
        \node at (-4,9) {(i)};
        \tikzPtilde{a}{b}{a}{a+1}{2.5}{5.5}{2.5}{0}
      \end{tikzpicture}
      &
      \begin{tikzpicture}[scale=0.27]
        \node at (-4,9) {(ii)};
        \tikzPtilde{a+1}{b}{a}{a+1}{5.5}{1.5}{3.5}{0}
      \end{tikzpicture}
      \\
      \begin{tikzpicture}[scale=0.27]
        \node at (-4,9) {(iii)};
        \tikzPtilde{a}{b}{a}{a+1}{2.5}{5.5}{2.5}{1}
      \end{tikzpicture}
      &
      \begin{tikzpicture}[scale=0.27]
        \node at (-4,9) {(iv)};
        \tikzPtilde{a}{b}{a}{a+1}{5.5}{1.5}{5.5}{1}
      \end{tikzpicture}
    \end{tabular}
  \end{center}
  
  \caption{The matrix $\tilde{P}$ in Cases 3.i -- 3.iv. Vertical and horizontal lines represent columns and rows of $\tilde{P}$. The shaded rectangle represents $g_1 = [x,b \mid a, a+1]$, where $x \in \{ a,a+1\}$ depends on the case. The diagonal lines represent the leading term of $g_2$. So $g_2 = [I|J]$ is the minor of the submatrix obtained by removing vertical and horizontal lines which do not meet any of the diagonal lines. The value $|\MI|$ is the number of horizontal lines bounding the shaded rectangle which meet the diagonal line. So $|\MI| = 1$ in Cases 3.i and 3.ii and $|\MI| = 2$ in Cases 3.iii and 3.iv. If the diagonal line meets the top left corner of the shaded rectangle, then $u = 1$ as in Case 3.i and Case 3.iii. If the diagonal line meets the bottom right corner of the shaded rectangle then $u = 2$ as in Case 3.ii and Case 3.iv. In Cases 3.i and 3.ii, $\tilde{P}$ is a square matrix and $g_2$ is obtained by removing a row and column. In Cases 3.iii and 3.iv, $\tilde{P}$ has one more column than it has rows and $g_2$ is obtained by removing a single column.}
  \label{fig:thm_Ollie_cases}
\end{figure}
\textbf{Case 3.iv.} $u = 2$ and $\lv \I \rv = 2$.
We have $i_2 = j_1$, because $p_{i_2, j_2}$ divides the initial term of $g_2$ which is obtained by taking the determinant of $\tilde{P}$ after removing column $j_1$. Let $a := i_2 = j_1$ and $b := i_1 < a$. By the relabelling to $\tilde{P}$ we have that $j_2 = a+1$. Note that in this case $m = n+1$.

We now check that rearranging \eqref{eqn:spoly3} from Case 3.iii gives a reduction of $S(g_1, g_2)$ to zero. Note that the value of $b$ is different in the current case to Case 3.iii however the same equation holds by the same proof.
% We proceed by showing that the coefficients of $g_1$ and $g_2$ agree with those from the $S$-polynomial. Then we show that every other terms appearing in the expression is smaller than the leading term of the $S$-polynomial.

Note that $g_2 = [[n] \mid [m] \backslash \{a\}]$ appears on the left hand side of~\eqref{eqn:spoly3} with coefficient $p_{b,a}$ as expected. On the other hand, $g_1 = [ba \mid a(a+1)]$ appears on the right hand side with coefficient $(-1)^{2a} \init(h)$ where $h = [[n] \backslash \{a\} \mid [m] \backslash \{a,a+1\}]$, as required.

Next we check that all other terms appearing in \eqref{eqn:spoly3} are smaller than the initial term of $S(g_1,g_2)$. This can be seen from the table in Appendix~\ref{tab:ini-3iv}. First we calculate explicitly the initial term of the $S(g_1, g_2)$ which is either the second largest term of $\init(h)g_1$ or $p_{b,a}g_2 $ where $h = [[n] \backslash \{a\} \mid [m] \backslash \{a,a+1\}]$. Second we see from the table that every other term in \eqref{eqn:spoly1} is less than or equal to the initial term of $S(g_1, g_2)$.
% We see that each leading term of products of determinants has two variables from row $b$ and exactly one variable from every other row and column of $\tilde{P}$. First we compare $g$ to $g' := \init(p_{b,a}[[n] \mid [m] \backslash \{a\}])$. The first row for which $g'$ and $g$ have a different variable is row $a$. For $g'$ the variable is $p_{a,a+1}$ whereas for $g$ it is $p_{a,a}$. Since $p_{a,a+1} \prec p_{a,a}$, we have that $g' \prec g$. 
%
% Now fix $i < b$ and let $g' = \init([ib \mid a(a+1)][[n] \backslash \{i\} \mid [m] \backslash \{a,a+1\}])$. The first row for which the variables appearing in $g$ and $g'$ are different is row $i$. For $g'$ the variable is $p_{i,a}$ whereas for $g$ the variable is $p_{i,i}$. Since $i<b<a$ we have that $g' \prec g$.
%
% Now fix $i > b$ with $i \neq a$ and let $g' = \init([bi \mid a(a+1)][[n] \backslash \{i\} \mid [m] \backslash \{a,a+1\}])$. If $i < a$ then the first row for which the variables appearing in $g$ and $g'$ are different is row $i$. For $g'$ the variable is $p_{i,a+1}$ whereas for $g$ the variable is $p_{i,i}$. So we have that $g' \prec g$. If $i > a$ then the first row for which the variables appearing in $g$ and $g'$ are different is row $a$. For $g'$ the variable is $p_{a,a+2}$ whereas for $g$ the variable is $p_{a,a+1}$. So $g' \prec g$ which completes the proof.
%
%
%
\end{proof}

\begin{Proposition} \label{prop:intersection}
  Let $d = \ell = t$ and $ s = 2$. Then,
%\begin{equation} \label{eqn:intersection}
  \[
    \sqrt{\ID}= I_0 \cap \bigcap_{S \in\ML} I_S,
    \qquad \text{with $\ML$ as in Theorem~\ref{thm:intersection}.}
  \]
    % \end{equation}
\end{Proposition}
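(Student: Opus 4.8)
The plan is to establish the stated identity by passing to affine varieties over the algebraic closure $\bar K$. Since $I_0$ and each $I_S$ are prime by Theorem~\ref{thm:J0}, the ideal $I_0\cap\bigcap_{S\in\ML}I_S$ is radical, and $\sqrt{\ID}$ is of course radical; hence, by the Nullstellensatz, the equality $\sqrt{\ID}=I_0\cap\bigcap_{S\in\ML}I_S$ is equivalent to the set-theoretic identity
\[
  V(\ID)\;=\;V(I_0)\cup\bigcup_{S\in\ML}V(I_S)
\]
inside $\bar K^{\,d\times k\ell}$. Under the present hypotheses $s=2$ and $t=\ell=d$, one has $\Delta^{2,\ell}=\bigcup_{j\in[\ell]}\{B\subseteq C_j:|B|=2\}\cup\{R_1,\dots,R_k\}$, so $\ID$ is generated by the $2$-minors of the submatrices $P_{C_j}$ together with the maximal minors $[R_i]=\det P_{R_i}$ ($i\in[k]$); thus $V(\ID)$ is the set of $P$ with $\rank P_{C_j}\le 1$ for all $j$ and $\det P_{R_i}=0$ for all $i$. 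Since membership in $V(\ID)$, $V(I_0)$ and $V(I_S)$ all include the conditions $\rank P_{C_j}\le 1$, I will only track the remaining conditions.

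For the inclusion ``$\supseteq$'' I would check $\ID\subseteq I_0$ and $\ID\subseteq I_S$ directly. The $2$-minors of $P_{C_j}$ lie in $J_{\Delta^{2,C}}\subseteq I_0,I_S$; each $[R_i]$ lies in $I_0$ because $R_i\in\Delta^t$; and for $S\in\ML$ the condition $\lv\{i_1,\dots,i_k\}\rv=k$ forces $S$ to contain exactly one index from each row $R_i$, so modulo $I_S$ that column of $P$ vanishes, $P_{R_i}$ acquires a zero column, and $[R_i]\in I_S$. Hence $\ID\subseteq I_0\cap\bigcap_S I_S$, which gives $V(I_0)\cup\bigcup_S V(I_S)\subseteq V(\ID)$.

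The substance of the proof is the inclusion ``$\subseteq$''. Fix $P\in V(\ID)$. If $\rank P\le d-1$ then, since $d=\ell=t$, all $\ell$-minors of $P$ vanish and $P\in V(I_0)$, so assume $\rank P=d$. Choose $d$ linearly independent columns of $P$; two columns from a common group $C_j$ are proportional (as $\rank P_{C_j}\le 1$) and hence dependent, so the chosen columns lie in $d$ distinct groups, and since there are exactly $\ell=d$ groups they consist of one nonzero column $w_j\in C_j$ for every $j$, with $\{w_1,\dots,w_\ell\}$ a basis of $\bar K^d$. Consequently every column of $P$ in $C_j$ equals $\lambda_{i,j}w_j$ for a scalar $\lambda_{i,j}$. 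For each $i\in[k]$ the columns of $P_{R_i}$ are $\lambda_{i,1}w_1,\dots,\lambda_{i,\ell}w_\ell$, so $\det P_{R_i}=0$ forces $\lambda_{i,j}=0$ for some $j$; put $Z_i=\{\,j:\lambda_{i,j}=0\,\}\neq\emptyset$. I then claim one can pick $\sigma(i)\in Z_i$ for each $i$ with $\lv\{\sigma(1),\dots,\sigma(k)\}\rv\ge 2$: otherwise every tuple in $\prod_i Z_i$ is constant, which—because $k\ge 2$—forces all $Z_i$ to equal one common singleton $\{c\}$; but then $v_{(i,c)}=0$ for all $i$, so the entire group $C_c$ is zero, contradicting $w_c\neq 0$. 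For such a choice $S=\{(i,\sigma(i)):i\in[k]\}\in\ML$, all columns of $P$ indexed by $S$ vanish, and therefore $P\in V(I_S)$. This proves the identity, hence the proposition.

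The main obstacle is precisely the last step: arranging that the zero columns forced by $\det P_{R_i}=0$ can be chosen to meet at least two column-groups, so that $S$ genuinely lies in $\ML$ and does not simply pick out one full group $C_c$. This is exactly where the assumption $\rank P=d$ (equivalently, $P\notin V(I_0)$) enters, through the basis $w_1,\dots,w_\ell$ furnishing a nonzero representative in every $C_j$. The reduction to a set-theoretic statement and the inclusion ``$\supseteq$'' are routine once Theorem~\ref{thm:J0} is available.
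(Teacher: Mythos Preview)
Your argument is correct, but it proceeds along a genuinely different route from the paper's. Both proofs reduce to a variety statement over an algebraically closed field; however, the paper justifies radicalness of the right-hand side via the squarefree Gr\"obner bases of Theorem~\ref{thm:Ollie}, while you invoke primeness from Theorem~\ref{thm:J0} (which is logically independent of this proposition, so there is no circularity). The real divergence is in the nontrivial inclusion. The paper takes $\bolda\in V(\ID)$ with $\bolda\notin\bigcup_{S}V(I_S)$ and shows every $\ell$-minor of $\bolda$ vanishes by a reverse induction on $r_B=\max_i|B\cap R_i|$, repeatedly swapping columns within a group $C_j$ and exploiting that no $S\in\ML$ gives a zero submatrix. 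You instead take $P\notin V(I_0)$, i.e.\ $\rank P=d$, extract a basis $w_1,\dots,w_\ell$ consisting of one nonzero column from each $C_j$, write every column as $\lambda_{i,j}w_j$, and read off directly from $\det P_{R_i}=\prod_j\lambda_{i,j}\cdot\det(w_1\mid\cdots\mid w_\ell)=0$ that some $\lambda_{i,j}$ must vanish in each row; the pigeonhole step producing $S\in\ML$ is then immediate. Your approach is shorter and more conceptual in the square case $d=\ell$, making the role of the rank hypothesis transparent; the paper's inductive column-exchange argument is more combinatorial but is structured so that its pieces feed directly into the proof of Theorem~\ref{thm:intersection} for $d>\ell$, where one restricts to $\ell$-row submatrices and needs exactly the conclusion ``$\bolda\notin\bigcup_S V(I_S)\Rightarrow$ all $\ell$-minors vanish''.
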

\begin{proof}
%  We show that $\sqrt{\ID}=J \cap I_0$, where
  Let $J = \bigcap_{S \in \LL} I_S$.
  By Theorem~\ref{thm:Ollie} the ideals $I_S$ and $I_0$ have a squarefree Gr\"obner basis, therefore each ideal is radical. Also
   $\ID \subseteq I_S$ for each $S\in\ML$, hence $\sqrt{\ID} \subseteq J \cap I_0$. For the opposite inclusion, it is sufficient to show that $V(\ID) \subseteq V(J) \cup V(I_0)$.
So take
$$\bolda = 
\begin{pmatrix}
a_{1,1}		    &a_{1,2}	&\cdots		&a_{1,k\ell}	\\
a_{2,1}		    &a_{2,2}	&\cdots		&a_{2,k\ell}	\\
\vdots		    &\vdots		&			&\vdots		\\
a_{\ell,1}		&a_{\ell,2}	&\cdots		&a_{\ell,k\ell}	\\
\end{pmatrix}\in V(\ID)\setminus V(J).$$
It is enough to show that $\bolda \in V(I_0)$.  Since $\bolda \notin \bigcup_{S \in \LL} V(I_S)$, the submatrix
$\bolda_S$ is non-zero for every $S \in \LL$.  Now, we show that $\lv \bolda_B \rv = 0$ for any $B\subset[k\ell]$ with
$|B|=\ell$.

Assume first that there exist $i,i'$ such that $\MY_{i,j}, \MY_{i',j}\in B$. We write $b = \MY_{i,j}$ and $b' = \MY_{i',j}$.
Since $b, b'$ belong to the same column $C_j$ of $\MY$, all 2-minors in the submarix $\bolda_{\{b, b'\}}$ are $0$.  When expanding $\lv \bolda_B \rv$ along all columns except~$b, b'$, the result is an expression for $\lv \bolda_B \rv$ in terms of the 2-minors in the submatrix~$\bolda_{\{b, b'\}}$, and so $\lv \bolda_B \rv = 0$.

Otherwise, if such $i,i'$ do not exist, then $B = \{a_1,\dots,a_\ell\}$ with $a_j \in C_j$.
For each $i$ we let $r_i = \lv B \cap R_i \rv$.
Let $r_B = \max_i\{r_i\} = \max_i\{\lv B \cap R_i \rv \}$.  Choose $i_B$ such that $r_{i_B} = r_{B}$.

We proceed by reverse induction on~$r_B$.
For the base case, suppose $r_B = \ell$.  That is $B = R_{i_B}$.
By definition, the minor $[B]$ is a generator of $\ID$, hence $\lv \bolda_B \rv = 0$.

Now let us suppose that $0 < r_B < \ell$. We write $B = \{a_1,a_2,\dots,a_{\ell} : a_j \in C_j\}$. Let us assume by contradiction that $\lv \bolda_B \rv \neq 0$ i.e.\ $\matrank(\bolda_B) = \ell$. Define $T := R_{i_B} \backslash B$ which is non-empty because $r_B  < \ell$. Let $\tau \in T$. Then $\tau \in C_j$ for some $j$. Suppose that column $\bolda_{\{\tau\}}$ is not a zero column. Let
$B' = (B \cup \{\tau\}) \backslash \{a_j\}$.  Then $r_{B'} = r_{B} + 1$.
Since all 2-minors in $\bolda_{\{\tau,a_j\}}$ are $0$ it follows that $\bolda_{\{\tau\}}, \bolda_{\{a_j\}}$ are linearly dependent. Since they are both non-zero we deduce that $\matrank(\bolda_B) = \matrank(\bolda_{B'})$. So by the inductive hypothesis  $\matrank(\bolda_{B'}) < \ell$, and we have a contradiction. Therefore, $\bolda_{\{ \tau \}} = \underline{0}$ is a zero column for each $\tau \in T$. 

Fix some $b = \MY_{i_{B},j} \in T$. Since $r_B < \ell$ there exists $R_{i'} \neq R_{i_B}$ for which $R_{i'} \cap B \neq \emptyset$. Let,
$$T' := \{\MY_{i', \beta} : \beta \in [\ell], a_{\beta} \in R_{i_B} \} \neq \emptyset.$$
Suppose there exists $b' = \MY_{i',j'} \in T'$ such that $\bolda_{\{b'\}} = \underline{0}$. If for each $i'' \in [k] \backslash \{i_{B},i'\}$ there exists $j''_{i''} \in [\ell]$ with $\bolda_{\{\MY_{i'',j''_{i''}}\}} = \underline{0}$ then the submatrix $\bolda_{S}$ of $\bolda$ with columns $S=\{b, b', \MY_{i'', j''_{i''}}\}\in\LL$ is the zero matrix contradicting our assumption $\bolda \not\in \bigcup_{S \in \LL} V(I_S)$.
Hence for some $i'' \in [k]$, each column of $\bolda$ indexed by $\MY_{i'',1}, \dots, \MY_{i'',\ell}$ is non-zero.
 For each $j'' \in [\ell]$, the columns $\bolda_{\{\MY_{i'',j''}\}}$ and $\bolda_{\{a_{j''}\}}$ are linearly dependent and non-zero.  We deduce that $\matrank(\bolda_B) = \matrank(\bolda_{R_{i''}}) < \ell$, a contradiction. Therefore for each $b' = \MY_{i',j'} \in T'$ we must have $\bolda_{\{b'\}} \neq \underline{0}$. 

Let $B' = (B \cup T') \backslash R_{i_B}$.  Then $r_{B'} = r_{B} + r_{i'} > r_{B}$.
For each $b' = \MY_{i',j'} \in T'$ the columns $\bolda_{\{b'\}}$ and $\bolda_{\{a_{j'}\}}$ are linearly dependent and non-zero. So we deduce that $\matrank(\bolda_B) = \matrank(\bolda_{B'})$. % Since $r_{i'} + r_B > r_B$,
By the inductive hypothesis, $\matrank(\bolda_{B'}) < \ell$, a contradiction. So $\matrank(\bolda_B) < \ell$ and we have shown that $\lv \bolda_B \rv = 0$. 

Therefore $\bolda \in V(I_0)$, which completes the proof.
\end{proof}

\begin{proof}[{\bf Proof of Theorem~\ref{thm:intersection}}]
  We show that the statement in Proposition~\ref{prop:intersection} holds for $d > \ell$.
  As before, let $\bolda = (a_{i,j}) \in V(\ID)$ and assume that $\bolda \not\in V \left( \bigcap_{S \in \LL} I_S \right) = \bigcup_{S \in \LL} V\left(I_S \right)$.
  We may assume that $K$ is infinite.  If not, then replace $K$ by an infinite algebraic extension~$K'$ (e.g. its algebraic closure).  The matrix $\bolda$ can also be interpreted as a matrix over~$K'$, which does not change its minors.

  The statement of the theorem is invariant under multiplication from the left by elements of~$\operatorname{GL}_{d}(K)$, in the following sense: Let $G\in\operatorname{GL}_{d}(K)$, and let $T\subseteq\MY$.  Then $G$ induces a bijection of
  \begin{equation*}
    \la[S|T] : S\subseteq[d], |S| = |T|\ra
  \end{equation*}
  that is linear on the generators.
  By assumption, for any $S\in\ML$, there exists $j_{S}\in S$ such that the $j_{S}$-th column $\bolda_{j_{S}}$ of $\bolda$ does not vanish.
  Applying a suitable coordinate transformation in $\operatorname{GL}_{d}(K)$, we may assume that the entries
  $a_{1,j_{S}}\neq 0$ for all~$S\in\ML$ (this is possible since $K$ is infinite by assumption).

  Let $A\subseteq\MX$ with $|A|=\ell$.  If $1\in A$ (or, more generally, if for any $S\in\ML$ there exist $i\in A$, $j\in\MY$ with $a_{i,j}\neq 0$), then $[A|T](\bolda)=0$ by Proposition~\ref{prop:intersection} applied to the submatrix of $\bolda$ of those rows indexed by~$A$.  Otherwise, let $A'=A\cup\{1\}$, and let $\bolda'$ be the submatrix of $\bolda$ obtained by restricting to the rows indexed by~$A'$.
  We consider two cases:

  First, assume that $T$ contains $t_{0}$ with $a_{1,t_{0}}\neq 0$ (for example, $t_{0}:= j_{A'}$ for some $A'\in\ML$.
  Let $\boldb$ be the matrix obtained from $\bolda[A|T]$ by adding a copy of the $t_{0}$th column to the end.  Then $|\boldb|=0$.
  Let $T=\{t_{1},\dots,t_{\ell}\}$.  Expanding $|\boldb|$ along the last column gives
  \begin{equation*}
    a_{1,t_{0}}|\bolda| = \sum_{a\in A}(\pm1)a_{a,t_{0}}[A\cup\{1\}\setminus\{a\}|T](\bolda).
  \end{equation*}
  Here, $a_{1,t_{0}}\neq 0$, and $[A\cup\{1\}\setminus\{a\}|T](\bolda) = 0$ by Proposition~\ref{prop:intersection}.

  Second, assume that $a_{1,t}=0$ for all~$t\in T$.  Let $t_{0}\in\MY$ be arbitrary with $a_{1,t_{0}}\neq 0$,
  and let $\boldb$ be the matrix obtained from $\bolda[A|T]$ by adding a copy of the $t_{0}$th column to the end.  Let $T=\{t_{1},\dots,t_{\ell}\}$.
  Expanding $|\boldb|$ along the first row gives $|\boldb|= a_{1,t}[A|T]$.
  Expanding $|\boldb|$ along the first column shows that $|\boldb|$ is a linear combination of minors of the form $[A''|T'']$ with
  $A''\subset A'$, $T'' = T\cup\{t_{0}\}\setminus\{t_{1}\}$ and $|A''|=|T''|=t$.  These minors $[A''|T'']$ vanish by the first case.  Thus, $[A|T](\bolda) = a_{1,t}^{-1}|\boldb| = 0$.
\end{proof}

\begin{proof}[{\bf Proof of Corollary~\ref{cor:number}}]
  We first show that for each pair of distinct subsets $S,T$ in $\mathcal{L}\cup \{\emptyset \}$ neither $I_S\subset I_T$ nor $I_T\subset I_S$. 

  Suppose by contradiction that $I_S \subset I_T$ for some $S, T \in \LL$. Then $I_T$ contains all the variables in $I_S$. These variables are exactly those indexed by $S$. However the only variables contained in $I_T$ are those indexed by $T$. Since $\lvert S \rvert = \lvert T \rvert$ we deduce $S = T$, a contradiction.

  Next we take $S \in \LL$ and show that $I_S \not\subset I_0$ and $I_0 \not\subset I_S$. Note that $I_0$ contains no variables which is easily checked by applying Theorem~\ref{thm:Ollie}. Since $I_S$ contains variables indexed by $S$, we conclude that $I_S \not\subset I_0$. For the other non-inclusion consider the set
  $$A = R_1 \cup \{d\} \backslash S $$ 
  where $R_1$ is the first row of $\MY$, $d \in C_i \backslash S$ and $C_i$ is the unique column for
  which $R_1 \cap S \subset C_i$. Note that $C_i \backslash S \neq \emptyset$ by the
  definition of $\LL$. Clearly $[A] \in I_0$ since $I_0$ contains all maximal minors of $P$. Next we
  prove that $[A] \not\in I_S$ from which it follows that $I_0 \not\subset I_S$.

  By Theorem~\ref{thm:Ollie}, the set of $2$-minors and variables that generate $I_S$ form a
  Gr\"obner basis for $I_S$ with respect to~$\plex$.
  Then $[A] \not\in I_S$ follows from the fact that for each generator $g$ of $I_S$, $\init(g)$ does
  not divide $\init([A])$.

  By Theorem~\ref{thm:intersection}, the number of prime
  components of $\ID$ is equal to the size of
  $\mathcal{L} \cup \{\emptyset \}$,
  which is ${\ell}^k - \ell + 1$.
\end{proof}

\begin{proof}[{\bf{Proof of Proposition~\ref{cor:dim}}}]
First let $S \in \ML$. The generating set
% \[G_S = \{p_{i,j}: 1 \le i \le d, j \in S \} \cup \{ [A \mid B], A \subseteq [d], B \subseteq (C_i \backslash S), |A| = |B| = 2, 1 \le i \le \ell \}, \] 
of $I_S$ is a Gr\"obner basis with respect to $\prec_{\lex}$. Note that $\init(g)$ is squarefree for each generator $g \in G$.  Let $\Psi$ be the simplicial complex on $\MX \times \MY$ such that the Stanley-Reisner ideal of $\Psi$ is $\init(I_S)$.
% We write $\Psi$ as a simplicial complex on $\MX \times \MY$. % and let $F$ denote its facets (maximal faces).
Note, 
\[\dim(R/I_S) = \dim(R/\init(I_S)) = \dim(\Psi) + 1 = \max_{f \in\Psi}\{|f|\}.\]

So it suffices to show that facets of $\Psi$ of maximum size have cardinality $\ell(d+k-1) -k$.  Thus, we first construct a face $f$ of $\Psi$ of size $\ell(d+k-1) -k$, and then we show that no face has size larger than $\ell(d+k-1) -k$.

Let $c_i = \max\{C_i \backslash S\}$. We construct $f$ as the disjoint union of $f_{1}$ and $f_{2}$, where
\[f_{1} = \{(d,t) : t \in [\ell k] \backslash S \}
  \quad\text{ and }\quad
  f_{2} = \{(r,c_i) : 1 \le r \le d-1, 1 \le i \le \ell \}. \] Note that $C_i \backslash S$ is non-empty because $S \in \ML$. We have that % |\{(d,t) : t \in [\ell k] \backslash S \}|
$|f_{1}| = \ell k - k$ since $|S| = k$ and % |\{(r,c_i) : 1 \le r \le d-1, 1 \le i \le \ell \}|
$ |f_{2}| = \ell (d - 1)$. These sets are disjoint, so $|f| = \ell(d + k - 1) -k$.
It is straightforward to check that $f$ is indeed a face of~$\Psi$.

Now let $f'$ be a face of $\Psi$. We show that $|f'| \le |f|$. For each $i \in [\ell]$ consider the submatrix $B_i = [d] \times (C_i \backslash S) \subseteq \MX \times \MY$. Note that $f' \cap ([d]\times S) = \emptyset$ because each $(i,j) \in [d]\times S$ is a minimal non-face of $\Psi$. 
For each quadratic generator of $I_S$ whose variables are indexed by elements of $B_i$, we obtain a minimal non-face of $\Psi$. By observing these non-faces it is straightforward to show $|f' \cap B_i| \le (d - 1) + |C_i \backslash S|$. Summing over the $B_i$'s gives,
\[|f'| = \sum_{i = 1}^{\ell} |f' \cap B_i| \le \ell(d - 1) + \sum_{i = 1}^{\ell} |C_i \backslash S| = \ell(d-1) + |[k\ell] \backslash S|= \ell(k + d - 1) - k.\]

The proof of $\dim(I_0) = \ell(k + d) - d - 1$ follows similarly. Let $\Psi$ be the simplicial complex on the vertex set $\MX \times \MY$ associated to the Stanley-Reisner ring $R/\init(I_0)$.  Let us construct $f$ as a disjoint union of $f_1, f_2$ and $f_3$, where
\[f_1 = \{(1,t) : t \in [k\ell] \}, \ 
f_2 = \{(r,j) : 2 \le r \le d, j \in (R_1 \backslash C_{\ell}) \} \ \text{ and } \]
\[
f_3 = \{ (r,k(\ell -1) + 1) : 2 \le r \le \ell - 1 \}. \]
It is straightforward to see that $f$ does not contain any minimial non-faces of $\Psi$ hence it is a face of size $\ell(k + d) - d - 1$. To show that $f$ is a facet, we take any subset of vertices $f' \subseteq \MX \times \MY$ and show that if $|f'| = \ell(k + d) - d$ then $f'$ contains a minimal non-face of $\Psi$.

Let $B_i = [d] \times C_i \subset \MX \times \MY$ for each $1 \le i \le \ell$. If  $f' \cap B_i$ contains two elements of the form $(i_1, j_1), (i_2, j_2)$ with $i_1 < i_2$ and $j_1 < j_2$ then $[i_1, i_2 | j_1, j_2]$ % = p_{i_1, j_1}p_{i_2,j_2} - p_{i_1, j_2} p_{i_2, j_1}
is a generator of $I_0$ with initial term $p_{i_1, j_1} p_{i_2, j_2}$. Hence $\{(i_1, j_1), (i_2, j_2) \}$ is a non-face of $\Psi$. So we may assume that no pairs exist for any $f' \cap B_i$. In general we say a subset $A \subset \MX \times \MY$ satisfies condition $(*)$ if for each $A \cap B_i$, there are no pairs $(i_1, j_1), (i_2, j_2) \in A \cap B_i$ with $i_1 < i_2$ and $j_1 < j_2$. It follows that $|f' \cap B_i| \le k + d - 1$ for all $i$.

For each $f' \cap B_i$ with size strictly less than $k + d - 1$, one can always find an element $(a,b) \in B_i$ such that $(f' \cap B_i) \cup \{(a,b)\}$ satisfies condition $(*)$. So there exists $f''$ such that $f' \subseteq f'' \subset \MX \times \MY$, $f''$ satisfies $(*)$  and $|f''| = \ell(k + d -1) = |f'| + (d - \ell)$. Since $|f'' \cap B_i| = k + d - 1$, it follows that there is at least one element of $f'' \cap B_i$ belonging to each row. That is, for each $1 \le i \le \ell$ and $1 \le r \le d$ there exists $(r, j_i) \in f'' \cap B_i$ for some $j_i$. Now let us consider $f'' \backslash f'$, which is set of size $d - \ell$. The elements of $f'' \backslash f'$ belong to at most $d-\ell$ distinct rows of $\MX \times \MY$. So there are at least $\ell$ other rows, and we consider any $\ell$-subset $r_1 < \dots < r_\ell \subset \MX$ of these rows.
So we have $(r_1, j_1), (r_2, j_2), \dots, (r_\ell, j_\ell) \in f'$. By construction we have $j_i \in C_i$ for each $i$ and $r_1 < \dots < r_\ell$. So $(r_1, j_1), \dots, (r_\ell, j_\ell)$ is a non-face of $\MX \times \MY$ because $[r_1, \dots, r_\ell | j_1, \dots, j_\ell]$ is a generator of $I_0$.
\end{proof}

\begin{proof}[{\bf{Proof of Theorem~\ref{thm:J0}}}]
  For each $S \in \LL$, the ideal $I_S$ is prime since it is generated by a collection of variables and 2-minors which arise from distinct columns of $P$.
 
 To show that $I_0$ is prime we proceed by induction on $\ell$. For $\ell = 2$ the result holds by Lemma~\ref{base case}. If $\ell>2$ then by Lemma~\ref{localise ideal} we have that $I_0$ is prime for $(k,\ell,d,s,t)$ if and only if $I_0$ is prime for $(k,\ell-1,d,s,t-1)$. By induction hypothesis $I_0$ is prime for $(k,\ell-1,d,s,t-1)$ which completes the proof.
\end{proof}

Now, we mention the lemmas we used in the proof of Theorem~\ref{thm:J0}.
In the following we write $G = G(I_0)$ and fix $k,t = \ell,s = 2, d > \ell$ along with matrices $\MY$ and $P$. We also denote $ B_j$ for the submatrix $P_{C_j}$ for $1 \le j \le \ell$.

\begin{Lemma}
\label{localise vars}
For each $1 \le j \le \ell-1$ the variable $p_{d, (j-1)k+1}$ is a non-zerodivisor modulo~$I_0$.
\end{Lemma}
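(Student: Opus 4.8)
The plan is to show that $p_{d,(j-1)k+1}$ does not lie in any associated prime of $R/I_0$. Since, by Theorem~\ref{thm:Ollie}, the set $G(I_0)$ is a Gröbner basis with squarefree leading terms, the ideal $\init_{\plex}(I_0)$ is a squarefree monomial ideal, hence radical; consequently $I_0$ itself is radical, and $\init_{\plex}(I_0)$ is the Stanley–Reisner ideal of some simplicial complex $\Psi$. A variable $p_{u,v}$ is a non-zerodivisor modulo a radical ideal precisely when it avoids every minimal prime. Passing to initial ideals, it suffices to check that the vertex $(d,(j-1)k+1)$ of $\MX\times\MY$ — which indexes the column variable $p_{d,(j-1)k+1}$ — lies in some facet of $\Psi$ together with a complementary set, i.e.\ that this vertex is not a cone point of the Stanley–Reisner complex; more precisely, I would show $\init_{\plex}(I_0) : p_{d,(j-1)k+1}^\infty = \init_{\plex}(I_0)$ by a direct monomial argument, or equivalently that no minimal prime of $\init_{\plex}(I_0)$ contains $p_{d,(j-1)k+1}$.

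Concretely, I would first list the leading terms of the generators in $G(I_0)$ using the $\plex$ order described before Theorem~\ref{thm:Ollie}: each quadratic generator $[i_1 i_2\mid j_1 j_2]$ with $\{j_1,j_2\}\subset C_r$ has leading term $p_{i_1,j_1}p_{i_2,j_2}$ (with the rows ordered), and each $t$-minor $[A\mid B]$ with $|B\cap C_j|\le 1$ has leading term $\prod_{s} p_{a_s,b_s}$ along the main diagonal. The key observation is that $(j-1)k+1$ is the \emph{top} entry of column $C_j$ (in the labelling of Section~\ref{sec:determ-hyper-ideals}, column $C_j = \{(j-1)k+1,\dots,jk\}$ with smallest index $(j-1)k+1$), and $d$ is the \emph{largest} row index. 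I would argue that the monomial $m := \prod_{v\in \MY} p_{1,v}$, or a suitable variant supported on the diagonal used in the facet constructed in the proof of Proposition~\ref{cor:dim}, can be chosen to contain $p_{d,(j-1)k+1}$ while remaining outside $\init_{\plex}(I_0)$: since $\init_{\plex}(I_0)$ is generated in degrees $2$ and $t$, it suffices to exhibit a squarefree monomial divisible by $p_{d,(j-1)k+1}$ that is not divisible by any leading term in $G(I_0)$ and whose support therefore spans a face of $\Psi$ containing that vertex.

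The main obstacle will be producing such a face cleanly, because the leading terms of the $t$-minors impose a diagonal condition on any large face, and one must check carefully that after committing to the entry $(d,(j-1)k+1)$ one can still complete to a face avoiding all $t$-minor leading terms and all quadratic leading terms. I expect the construction to mirror the facet $f$ from the proof of Proposition~\ref{cor:dim}: take $f = \{(d,t): t\in[k\ell]\setminus S\}\sqcup\{(r,c_i):1\le r\le d-1,\ 1\le i\le \ell\}$ for a suitable $S\in\ML$ chosen so that $(j-1)k+1\notin S$ and so that $(d,(j-1)k+1)\in f$; one then verifies directly that $f$ contains no leading term of a generator (no two entries of $f$ share a column $C_r$ in the same pair, and $f$ meets the rows $1,\dots,d-1$ and $d$ in a pattern that cannot contain a $t\times t$ diagonal with column indices in distinct $C_j$'s), so $f$ is a face of $\Psi$. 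Since $(d,(j-1)k+1)\in f$ and $f$ is a face, the vertex $(d,(j-1)k+1)$ is not a minimal non-face and lies in a facet disjoint from some other vertex, hence $p_{d,(j-1)k+1}$ avoids the corresponding minimal prime of $\init_{\plex}(I_0)$; applying this across all minimal primes (each complementary to a facet of $\Psi$, and the same $f$ or a symmetric variant works throughout) shows $p_{d,(j-1)k+1}$ is a non-zerodivisor modulo $\init_{\plex}(I_0)$, and therefore modulo $I_0$.
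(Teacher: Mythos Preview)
Your plan has the right ingredients but a genuine gap in the Stanley--Reisner step. For a squarefree monomial ideal $\init_{\plex}(I_0)=I_\Psi$, the minimal primes are $P_F=\langle p_{u,v}:(u,v)\notin F\rangle$ as $F$ ranges over the \emph{facets} of~$\Psi$. Thus $p_{d,(j-1)k+1}$ is a non-zerodivisor modulo $\init_{\plex}(I_0)$ if and only if the vertex $(d,(j-1)k+1)$ lies in \emph{every} facet, i.e.\ it \emph{is} a cone point of~$\Psi$ (you wrote ``not a cone point'', which is the opposite condition). Exhibiting one face $f$ containing the vertex only shows that $p_{d,(j-1)k+1}\notin P_F$ for \emph{that} particular facet; your final sentence ``applying this across all minimal primes \dots\ the same $f$ or a symmetric variant works throughout'' is where the argument breaks, because different facets of $\Psi$ are not related by any symmetry that fixes the vertex, and you give no mechanism for producing a face containing $(d,(j-1)k+1)$ inside an arbitrary facet.

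The clean way to establish the cone-point property---and this is what the paper does---is to observe that $(d,(j-1)k+1)$ belongs to no minimal non-face of~$\Psi$, equivalently that $p_{d,(j-1)k+1}$ divides no leading term $\init(g)$ for $g\in G(I_0)$. This is a two-line check: for a $2$-minor $g$ with columns in $C_j$, the entry $p_{d,(j-1)k+1}$ sits in the bottom-left corner of the block $P_{C_j}$ and hence never on the main diagonal of a $2\times 2$ submatrix; for an $\ell$-minor $g=[A\mid B]$ with $|B\cap C_r|\le 1$ for all~$r$, the rightmost column of $B$ lies in $C_\ell$, so the unique row-$d$ factor of $\init(g)$ is $p_{d,b}$ with $b\in C_\ell$, whereas $(j-1)k+1\in C_j$ and $j\le\ell-1$. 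Once no leading term is divisible by the variable, the non-zerodivisor conclusion follows either by your Stanley--Reisner route (now correctly: cone point $\Rightarrow$ in every facet $\Rightarrow$ in no minimal prime) or by the direct Gr\"obner reduction the paper gives: if $xf\in I_0$ with $f\notin I_0$ of minimal leading term, then $\init(g)\mid x\,\init(f)$ for some $g\in G(I_0)$, and since $x\nmid\init(g)$ one gets $\init(g)\mid\init(f)$, contradicting minimality after subtracting a multiple of~$g$.
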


\begin{proof}
First we show that for each $1 \le j \le \ell-1$ the variable $p_{d, (j-1)k + 1}$ does not divide $\init(g)$ for any $g \in G$. First note that $G$ only contains $2$-minors and $\ell$-minors. Each $2$-minor is obtained from a submatrix lying entirely within a single block $B_i$ for some $i$. If $p_{d, (j-1)k + 1}$ lies inside this block, that is $i = j$, then it is the bottom left entry and so does not lie on the leading diagonal of any $2 \times 2$ submatrix. Also any $\ell$-minor $g \in G$ is obtained from a submatrix whose last column lies in $B_{\ell}$. So the only indeterminates appearing as a factor of $\init(g)$ of the form $p_{\ell, *}$ lie in $B_{\ell}$. However $p_{d, (j-1)k + 1} \in B_j \neq B_{\ell}$ because $1 \le j \le \ell-1$.

Now fix $1 \le j \le \ell-1$ and suppose $x := p_{d,(j - 1)k + 1}$ is a zerodivisor modulo $I_0$. Then there exists $f \in R \backslash I_0$ such that $xf \in I_0$. Suppose without loss of generality that $\init(f)$ is chosen to be as small as possible with respect to $\prec$. We have the monomial $\init(xf) = x \, \init(f) \in \init(I_0) = \la \init(g): g \in G \ra$. Hence $x \, \init(f) \mid \init(g)$ for some $g \in G$. Since $x \nmid \init(g)$ we have $\init(f) = h \, \init(g)$ for some monomial $h$. Let $\bar{f} = f - hg$, note that $\init(\bar{f}) \prec \init(f)$. But $x\bar{f} = xf - xhg \in I_0$, contradicting minimality of $f$. Hence, $x$ is a non-zerodivisor.
\end{proof}

We first recall Lemma~3.10 from \cite{Fatemeh2} which is a useful tool to localize a determinantal ideal in non-zerodivisor variables. 

\begin{Lemma}[Localization Lemma]
  \label{rules}
  Let $P$ be an $m\times n$-matrix of indeterminates and let $I\subset K[P]$ be an ideal generated by a set $G$ of
  minors.
  Furthermore, let $i_{1},\dots,i_{k}\in[m]$ and $j_{1},\dots,j_{k}\in[n]$.  Assume that for each minor $[a_1,\ldots,a_t|b_1,\ldots, b_t]\in G$ % with $i_{0}\not\in\{a_1,\ldots,a_t\}$ and $j_{0}\not\in\{b_1,\ldots,b_t\}$,
 the minors
  $[\alpha_1,\ldots, \alpha_t|b_1,\ldots, b_t]$ also belong to $G$ for all
  $\{\alpha_{1},\ldots,\alpha_{t}\}\subset\{i_{1},\dots,i_{k},a_{1},\ldots,a_{t}\}$,
where $\alpha_{1}<\cdots<\alpha_{t}$.
  Then the localizations $(R/I)_{[i_{1},\dots,i_{k}|j_{1},\dots,j_{k}]}\iso
  (R/J)_{[i_{1},\dots,i_{k}|j_{1},\dots,j_{k}]}$ at the minor $[i_{1},\dots,i_{k}|j_{1},\dots,j_{k}]$ are isomorphic, where $J$ is generated by
\begin{itemize}
\item [(a)] the minors $[a_1,\ldots, a_t|b_1,\ldots, b_t]\in G$ with
  % $\{a_{1},\ldots,a_{t}\}\cap\{i_{1},\dots,i_{k}\} = \emptyset$ or
  $\{b_{1},\ldots,b_{t}\}\cap\{j_{1},\dots,j_{k}\} = \emptyset$,
\item
  [(b)] % the minors $[a_1\ldots\hat{a}_\ell\ldots a_t|b_1\ldots\hat{b}_k\ldots b_t]$ where $[a_1\ldots a_t|b_1\ldots b_t]\in \Gc$ and $a_\ell=i_{0}$ and $b_k=j_{0}$ for some $\ell$ and $k$.
  the minors $[\alpha_{1},\ldots,\alpha_{t-r}|b_1,\ldots,\hat{b}_{k_{1}},\ldots,\hat{b}_{k_{r}},\ldots, b_t]$ where  
  \begin{itemize}
  \item $[a_1,\ldots, a_t | b_1,\ldots, b_t] \in G$ and 
  \item $\{b_{k_{1}}, \ldots, b_{k-r}\} =\{b_{1},\ldots,b_{t}\}\cap\{j_{1},\ldots,j_{k}\}$ and
  \item  $\alpha_{1},\ldots,\alpha_{t-r}\in\{a_{1},\ldots,a_{t},
  i_{1},\ldots,i_{k}\}$.
  \end{itemize}
\end{itemize}
\end{Lemma}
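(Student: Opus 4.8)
The plan is to identify the localization $R_\delta$ at $\delta:=[i_1,\dots,i_k\mid j_1,\dots,j_k]$ with a ``big cell'' of the ambient matrix space and then run a Laplace‑expansion bookkeeping argument. First I would normalize: permuting the rows and columns of $P$ is a $K$‑algebra automorphism of $R=K[P]$ permuting the variables, and it carries the hypothesis on $G$, the ideals $I$ and $J$, and $\delta$ to analogous data in which $\{i_1,\dots,i_k\}=\{j_1,\dots,j_k\}=\{1,\dots,k\}$. So I may assume $U:=\{i_1,\dots,i_k\}$ and $V:=\{j_1,\dots,j_k\}$ are the first $k$ rows and columns and $\delta=[1,\dots,k\mid 1,\dots,k]$; it then suffices to construct a ring automorphism $\sigma$ of $R_\delta$ with $\sigma(IR_\delta)=JR_\delta$, since then $(R/I)_\delta=R_\delta/IR_\delta\cong R_\delta/\sigma(IR_\delta)=R_\delta/JR_\delta=(R/J)_\delta$.

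Second, I would take $\sigma$ to be Gaussian elimination against the block $P[U\mid V]$, which is invertible in $R_\delta$: put $\sigma(p_{ij})=p_{ij}$ whenever $i\le k$ or $j\le k$, and $\sigma(p_{ij})=p_{ij}+\big(P[U'\mid V]\,(P[U\mid V])^{-1}\,P[U\mid V']\big)_{ij}$ for $i,j>k$, where $U'$ and $V'$ are the complementary index sets. Each modified variable becomes itself plus a Laurent polynomial in $\delta$ and the unmodified variables, so $\sigma$ is an automorphism of $R_\delta$ fixing every variable in the first $k$ rows or first $k$ columns. I would then compute the effect of $\sigma$ on a generator $[A\mid B]\in G$ using row and column operations inside $P[A\mid B]$ together with the Desnanot--Jacobi identity. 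When $B\cap V=\emptyset$ one finds $\sigma([A\mid B])$ equals $[A\mid B]$ up to a $\delta^{-1}$‑multiple of bordered minors $[A\cup\{1\}\mid B\cup\{1\}]$, and expanding such a bordered minor along its new column rewrites it as $\delta\,[A\mid B]$ plus an $R$‑linear combination of minors of the shape $[\,\{1\}\cup(A\setminus\{a\})\mid B\,]$, which again lie in $G$ by the saturation hypothesis. When $|B\cap V|=r>0$ one finds $\sigma([A\mid B])=\pm\delta$ times a minor obtained by deleting the $r$ columns of $B\cap V$ and keeping a $(t-r)$‑subset of $\{a_1,\dots,a_t\}\cup\{i_1,\dots,i_k\}$ as its rows.

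Third, I would assemble the two cases: the saturation hypothesis guarantees that all row‑variant and bordered minors occurring in the expansions are themselves in $G$, so $\sigma(IR_\delta)$ is generated exactly by the minors of $G$ avoiding the columns $V$ (list (a)) together with the deleted‑column minors (list (b)); that is, $\sigma(IR_\delta)=JR_\delta$, finishing the argument. I expect the main obstacle to be precisely this assembling step: keeping track of which row‑variants show up in the Laplace/Desnanot--Jacobi expansions, and checking that the signs combine so that no division by an integer is ever needed (so the result is characteristic‑free), while confirming that the hypothesis produces the generators of list (b) exactly --- enough for $\sigma(IR_\delta)\subseteq JR_\delta$ and no more than that for the reverse inclusion. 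To organize this I would first settle the case $k=1$, where $\sigma$ is the explicit substitution $p_{ij}\mapsto p_{ij}+p_{i1}\,p_{11}^{-1}\,p_{1j}$ and all the identities above are elementary, and then iterate (or repeat the computation for general $k$).
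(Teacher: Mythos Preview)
The paper does not give its own proof of this lemma: it is quoted as Lemma~3.10 of \cite{Fatemeh2} and used as a black box, so there is no argument in this paper to compare against. Your Schur--complement/Gaussian--elimination automorphism of $R_\delta$ is precisely the standard device for such localization statements (and is presumably what \cite{Fatemeh2} does), so the overall plan is sound.

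One place to tighten: your claim that ``when $|B\cap V|=r>0$ one finds $\sigma([A\mid B])=\pm\delta$ times a single smaller minor'' is only clean once $U\subseteq A$. For instance, already for $k=1$ with $1\in B$ but $1\notin A$ one checks that $\sigma([A\mid B])=[A\mid B]$ unchanged, not $\delta$ times a $(t{-}1)$-minor. The saturation hypothesis is exactly what fixes this: it lets you replace each $[A\mid B]\in G$ by its row-variants $[A'\mid B]$ with $U\subseteq A'$, and \emph{those} are the generators to which your Schur-complement identity applies cleanly (for $k=1$: $\sigma([A'\mid B])=\delta^{-1}[A'\cup\{1\}\mid B\cup\{1\}]$ when $1\notin A',1\notin B$, and $\sigma([A'\mid B])=\delta\,[A'\setminus\{1\}\mid B\setminus\{1\}]$ when $1\in A',1\in B$). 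With that adjustment the bookkeeping you anticipate in the assembling step does go through, and the iteration from $k=1$ to general $k$ is straightforward.
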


\begin{Lemma}
\label{localise ideal}
$\left(R / I_0 \right)_{p_{d,1}} \cong \left(R / J \right)_{p_{d,1}}$, where
\begin{align*}
    J &= \left\la p_{i,j} : i \in [d], 1 < j \le k \right\ra \\
      & + \left\la [A \mid B] : A\subset [d], B\subset C_j, |A|=|B|=2,  j \ge 2 \right\ra\\
      & + \left\la [A \mid B] : A\subset[d], \tilde{B}\subset[k\ell]\backslash C_1, |A|=|B|=\ell-1, \lv \tilde{B} \cap C_j \rv \le 1 \right\ra.
\end{align*}
In particular $I_0$ is prime for $(k,\ell,d,s,t)$ if and only if $I_0$ is prime for $(k, \ell-1, d, s, t-1)$.
\end{Lemma}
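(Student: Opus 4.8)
The plan is to apply the Localization Lemma (Lemma~\ref{rules}) to $I_0 = J_{\Delta^{2,C}\cup\Delta^{\ell}}$, localizing at the single entry $p_{d,1} = [d\mid 1]$; that is, in the notation of Lemma~\ref{rules} I take the one-element index sets $\{i_1\}=\{d\}$ and $\{j_1\}=\{1\}$. The hypothesis of Lemma~\ref{rules} is immediate here: by Lemma~\ref{lemma:Generating} the set $G(I_0)$ contains $[A\mid B]$ for \emph{every} row set $A\subseteq[d]$ with $|A|=|B|$, so whenever $[a_1,\dots,a_t\mid b_1,\dots,b_t]\in G(I_0)$ one also has $[\alpha_1,\dots,\alpha_t\mid b_1,\dots,b_t]\in G(I_0)$ for every size-$t$ subset $\{\alpha_1,\dots,\alpha_t\}\subseteq\{d,a_1,\dots,a_t\}$. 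Hence the lemma applies and produces an isomorphism $(R/I_0)_{p_{d,1}}\cong (R/J')_{p_{d,1}}$ for the ideal $J'$ generated by the families in parts (a) and (b).

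Next I would read off $J'$ from parts (a) and (b) and identify it with the ideal $J$ in the statement. Part (b) applied to the quadratic generators $[a,a'\mid 1,c]$ of $G(I_0)$, where $c\in C_1\setminus\{1\}=\{2,\dots,k\}$, deletes column $1$ and yields, as $a,a'$ range over $[d]$, all linear forms $p_{i,c}$ with $i\in[d]$ and $1<c\le k$: this is the first summand of $J$. Part (b) applied to the $\ell$-minors $[A\mid B]\in G(I_0)$ with $1\in B$ (so necessarily $B\cap C_1=\{1\}$, since $|B\cap C_1|\le 1$) deletes column $1$ and yields, as $A$ ranges over all $\ell$-subsets of $[d]$, the $(\ell-1)$-minors $[A'\mid B\setminus\{1\}]$ with $A'$ an arbitrary $(\ell-1)$-subset of $[d]$ and $B\setminus\{1\}$ a transversal of $C_2,\dots,C_\ell$ disjoint from $C_1$: this is the third summand of $J$. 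Part (a) contributes the quadratic generators whose column set lies in some $C_j$ with $j\ge 2$ (the second summand of $J$), together with two families of ``extra'' generators that are redundant once the linear forms above are present: the quadratic minors inside $C_1$ avoiding column $1$, which are polynomials in the variables $p_{i,c}$ with $1<c\le k$; and the $\ell$-minors avoiding column $1$, each of which either cannot exist ($\ell$ columns cannot be fitted into the $\ell-1$ blocks $C_2,\dots,C_\ell$ with at most one per block) or uses some column $c\in\{2,\dots,k\}$ and so lies in $\langle p_{i,c}\rangle$. Collecting these observations shows $J'=J$, so $(R/I_0)_{p_{d,1}}\cong (R/J)_{p_{d,1}}$.

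For the final assertion, I would observe that modulo its linear part $\langle p_{i,j}: i\in[d],\ 1<j\le k\rangle$ the ideal $J$ involves only the columns $\{1\}\cup C_2\cup\cdots\cup C_\ell$, and column $1$ appears in none of the remaining generators (the quadratics live in $C_2,\dots,C_\ell$, the $(\ell-1)$-minors are disjoint from $C_1$). Therefore $R/J\cong K[p_{i,1}:i\in[d]]\otimes_K (R'/I_0')$, where, after relabelling $C_2,\dots,C_\ell$ as the $\ell-1$ blocks of a $k\times(\ell-1)$ grid, $R'/I_0'$ is exactly the quotient attached to parameters $(k,\ell-1,d,2,\ell-1)$. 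Since a polynomial ring over a commutative ring is a domain if and only if the coefficient ring is, $R/J$ is a domain precisely when $I_0$ is prime for $(k,\ell-1,d,2,\ell-1)$. Finally, by Lemma~\ref{localise vars} the element $p_{d,1}$ is a non-zerodivisor modulo $I_0$, and it is a non-zerodivisor modulo $J$ as well, being one of the free polynomial variables there; hence $R/I_0\hookrightarrow (R/I_0)_{p_{d,1}}$ and $R/J\hookrightarrow (R/J)_{p_{d,1}}$. Combining with the isomorphism of localizations gives the chain: $I_0$ prime for $(k,\ell,d,s,t)$ $\iff$ $R/I_0$ a domain $\iff$ $(R/I_0)_{p_{d,1}}$ a domain $\iff$ $(R/J)_{p_{d,1}}$ a domain $\iff$ $R/J$ a domain $\iff$ $I_0$ prime for $(k,\ell-1,d,s,t-1)$.

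I expect the main obstacle to be the bookkeeping in the second step: carefully matching the formal output of Lemma~\ref{rules} with the three explicitly listed generator families of $J$, and in particular confirming that the ``extra'' part-(a) generators (quadratics inside $C_1$ avoiding column $1$, and $\ell$-minors avoiding column $1$) are genuinely redundant modulo the linear forms $p_{i,c}$, $1<c\le k$. The tensor-product description of $R/J$ and the non-zerodivisor argument relating the various domain conditions are then routine.
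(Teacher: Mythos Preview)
Your proof is correct and follows essentially the same approach as the paper's: apply the Localization Lemma (Lemma~\ref{rules}) at $p_{d,1}$, identify the resulting generators with the three summands of $J$ by showing the leftover part-(a) generators are redundant modulo the linear forms, and then recognize $J$ (after killing those linear forms) as $I_0$ for the smaller parameter set tensored with a polynomial ring. Your treatment of the final biconditional is in fact more careful than the paper's---you explicitly invoke that $p_{d,1}$ is a non-zerodivisor modulo both $I_0$ (Lemma~\ref{localise vars}) and $J$ (as a free variable) to pass between primeness of the ideals and primeness of their localizations, whereas the paper handles this step more tersely via Lemma~\ref{lem:union}.
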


\begin{proof}
  Let $G(I_0)$ be the minimal generating set of $I_0$ from Lemma~\ref{lemma:Generating}. Since $p_{d,1}$ is a non-zerodivisor modulo $I_0$ by Lemma~\ref{localise vars} we apply the Localization Lemma~\ref{rules} to deduce,
  $$\left(R / I_0 \right)_{p_{d,1}} \cong \left(R / J \right)_{p_{d,1}},$$
  where $J$ is generated by:
  \begin{itemize}
  \item $[A \mid B] \in G$ where $1 \not\in B$,
  \item $[A\setminus\{i\}\mid B\setminus\{1\}]$, where $[A\mid B]\in G$, $1\in B$, $i\in A$.
  \end{itemize}
Thus, the 2-minors $[A \mid B] \in G$ with $1 \in B$ give rise to generators of $J$ which are variables $p_{i,j}$ with $i \in [d]$ and $1 < j \le k$.

If on the other hand $[A \mid B] \in G$ is an $\ell$-minor with $1 \in B$ then recall that $\lv B \cap C_j \rv \le 1$. This minor gives rise to a generator of $J$ of the form $[\tilde{A} \mid \tilde{B}]$ where $\tilde{A}\subset[d]$, $\tilde{B}\subset {R_i \backslash C_1}$,  $\lv \tilde{B} \cap C_j \rv \le 1$ and $|A|=|B|={\ell-1}$.

Finally if $[A \mid B] \in G$ is an $\ell$-minor with $1 \not\in B$. We have that $B \cap C_1 \neq \emptyset$ so let $c \in B \cap C_1$.
All variables $p_{i,c}$ for $i \in [d]$ belong to $J$, so $[A \mid B]$ is a redundant generator of~$J$.
This proves that the ideal $J$ is generated by the set of minors listed in the lemma.

Now let,
$Q_1 = \left\la p_{i,j} : i \in [d], 1 < j \le k \right\ra$.
and,
\begin{align*}
    Q &= \left\la [A \mid B] : A\subset [d], B\subset C_j, j \ge 2, |A|=|B|=2 \right\ra \\
      &+ \left\la [A \mid B] : A\subset[d], \tilde{B}\subset[k\ell] \backslash C_1, |A|=|\tilde{B}|={\ell-1}, \lv \tilde{B} \cap C_j \rv \le 1 \right\ra.
\end{align*}
Note that $Q_1$ is a monomial prime ideal (i.e.\ $Q_1$ is generated by variables).
Since the variables appearing in the generators of $Q_1$ and $Q$ are disjoint, we deduce that $I_0$ is prime if and only if $Q$ is prime.
The statement now follows from the observation that $Q$ is exactly $I_0$ with parameters for $(k,\ell,d,s,t)$ given by $(k,\ell-1,d,s,t-1)$.
\end{proof}

\begin{Lemma}
\label{base case}
$I_0$ is prime for $k \ge 2, \ell = 2, d \ge 2, s = 2, t = 2$.
\end{Lemma}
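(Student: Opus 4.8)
The plan is to recognise $I_0$, in this base case, as the ideal of all $2\times 2$ minors of a generic matrix, and then to conclude by the classical primeness of determinantal ideals of generic matrices. When $\ell=2$ we also have $t=\ell=2$, so $\Delta^t=\{B\subseteq\MY:|B|=2\}$ is the set of \emph{all} two-element subsets of $\MY=[2k]$; in particular $\Delta^{2,C}\subseteq\Delta^t$, hence $\Delta^{2,C}\cup\Delta^t=\Delta^t$ and
\[
  I_0 \;=\; J_{\Delta^t} \;=\; \la\, [A \mid B] : A \subseteq [d],\; B \subseteq [2k],\; |A|=|B|=2 \,\ra,
\]
which is precisely the ideal of all $2$-minors of the generic $d\times 2k$ matrix $P$. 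This matches Lemma~\ref{lemma:Generating}: there $G(I_0)$ is the union of the $2$-minors $[A\mid B]$ with $B$ contained in a single column-block $C_j$ and the $2$-minors $[A\mid B]$ with $|B\cap C_j|\le 1$ for every~$j$, and since each two-element subset $B\subseteq[2k]$ is of one of these two types, the two families together run over all $2$-minors of~$P$.

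With $I_0$ identified in this way, the lemma will follow immediately from the classical fact that the ideal of $t$-minors of a matrix of indeterminates is prime --- equivalently, $V(I_0)$ is the affine cone over the Segre embedding $\PP^{d-1}\times\PP^{2k-1}\hookrightarrow\PP^{2kd-1}$, an irreducible variety whose homogeneous prime defining ideal is exactly $I_0$ (see, e.g., Bruns and Vetter, \emph{Determinantal Rings}). Hence $I_0$ is prime.

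Should one wish to keep the argument self-contained within the present framework, I would instead argue as follows. It is classical that the set of all $2$-minors of a matrix of indeterminates forms a lexicographic Gr\"obner basis of the ideal it generates; its leading terms are squarefree, so the initial ideal of $I_0$ is squarefree and therefore $I_0$ is radical. Moreover, exactly as in the proof of Lemma~\ref{localise vars}, the variable $p_{d,1}$ does not divide the leading term of any $2$-minor in $G(I_0)$, hence is a non-zerodivisor modulo $I_0$, so that $R/I_0$ embeds into the localization $(R/I_0)[p_{d,1}^{-1}]$. In this localization each minor $[i,d\mid 1,j]\in I_0$ (with $i\neq d$, $j\neq 1$) yields the relation $p_{i,j}=p_{i,1}\,p_{d,j}\,p_{d,1}^{-1}$, so one can eliminate all variables except $p_{1,1},\dots,p_{d,1},p_{d,2},\dots,p_{d,2k}$; substituting these expressions back into an arbitrary $2$-minor of $P$ returns~$0$, whence $(R/I_0)[p_{d,1}^{-1}]$ is a localization of a polynomial ring in $d+2k-1$ variables and in particular a domain. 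It follows that $R/I_0$ is a domain, i.e.\ $I_0$ is prime.

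I do not expect a genuine obstacle here: the only things requiring verification are the bookkeeping of the first step (that for $\ell=t=2$ the two families generating $I_0$ together run over all $2$-minors of $P$) and, for the self-contained route, the routine substitution computation showing that the localized ring is a localization of a polynomial ring. Everything else is either classical or has already been established in the earlier parts of the paper.
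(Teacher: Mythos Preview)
Your proposal is correct and follows essentially the same approach as the paper: both recognise that for $\ell=t=2$ the ideal $I_0$ is generated by all $2$-minors of the generic $d\times 2k$ matrix~$P$, and both deduce primeness either by citing classical results or via localization at~$p_{d,1}$. Your self-contained localization argument is a hands-on version of the paper's appeal to its Localization Lemma, which reduces $(R/I_0)_{p_{d,1}}$ to a localization of a quotient by an ideal of variables.
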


\begin{proof}
  In this case, $I_0$ is a (generalised) binomial edge ideal generated by all $2$-minors of~$P$.
  Primeness follows from the results in~\cite{Rauh}, but also follows easily from the following argument.
  First note that $p_{k,1}$ is non-zerodivisor modulo $I_0$ by Lemma~\ref{localise vars}.  By the Localization Lemma~\ref{rules}, $(R/I_0)_{p_{k,1}} \cong (R / J)_{p_{k,1}}$ where $J$ is a monomial ideal generated by variables. In particular $J$ is prime and so $I_0$ is prime.
%Could not find a direct citation for this fact. It is used in: Binomial edge ideals and conditional independence statements. Could possibly cite Corollary 3.9 from that paper.
%Would also not be difficult to prove directly this fact. The idea is to localise at the variable p_{k,1} since this does not appear on the leading diagonal of any 2-minor, it is non-zerodivisor modulo J. The resulting ideal is just a monomial ideal generated by variables.
\end{proof}

%\begin{Lemma}
%  \label{lem:union}
%  \textcolor{blue}{Let $K$ be an algebraically closed field.}  If $\Delta = \Delta_{1}\cup\Delta_{2}$, where $(\bigcup_{F\in\Delta_{1}}F)\cap(\bigcup_{F\in\Delta_{2}}F)=\emptyset$,
%  then $\ID$ is prime if and only if both $J_{\MX,\Delta_{1}}$ and $J_{\MX,\Delta_{2}}$ are prime. In particular, if
%  $\Delta$ consists of pairwise disjoint sets, then $\ID$ is prime.
%\end{Lemma}
%\begin{proof}
%  \textcolor{blue}{First note that there are two polynomial rings $R_1$ and $R_2$ in disjoint sets of variables such that $J_{\MX,\Delta_{1}}\subseteq R_1$ and $J_{\MX,\Delta_{2}}\subseteq R_2$. Since $J_{\MX,\Delta_{1}}$ and $J_{\MX,\Delta_{2}}$ are prime, we have that $R_1/J_{\MX,\Delta_{1}}$ and $R_2/J_{\MX,\Delta_{2}}$ are integral domains. Hence, $(R_1/J_{\MX,\Delta_{1}})\otimes_K (R_2/J_{\MX,\Delta_{2}})$ is integral domain which implies that $\ID= J_{\MX,\Delta_{1}} + J_{\MX,\Delta_{2}}$ is prime.}
%% If the field $K$ is algebraically closed, then the first statement follows from the fact that $\ID= J_{\MX,\Delta_{1}} + J_{\MX,\Delta_{2}}$, where the
%% generators of $J_{\MX,\Delta_{1}}$ and $J_{\MX,\Delta_{2}}$ are polynomials in disjoint sets of variables.
%
%  The second  statement follows from the first and the following observation: for a single set $F$ the ideal $J_{\MX,\{F\}}$ is a
%  determinantal ideal and prime.
%\end{proof}
%\textcolor{blue}{We note that in general, Lemma~\ref{lem:union} is only true over an algebraically closed field. See Example~1.3 in \cite{Fatemeh2}.}

%%%%%%%%%%%%%%%%%%%%%%%%%%%%%%%%%%%%%%%%%%%%%%%%%%%%%%%%%%%%%%%%

\section{Further questions}\label{sec:gen}
The first generalization we studied is for $s=3$.  We present two examples: the first one is verified by {\tt Bertini} \cite{Bertini} and the second one by {\tt Singular} \cite{Singular}.
\begin{Example}
  \label{ex:3-3}
  Let $k = \ell = s = t = d = 3$. Then $\MY$ and $P$ are as in Example~\ref{ex:s2td3}, and
  \begin{equation*}
    \Delta = \big\{
       123, 456, 789, 147, 258, 369
    \big\}
  \end{equation*}
  % we have,
% $$\MY = 
% \begin{pmatrix}
% 1   &4  &7   \\
% 2   &5  &8  \\
% 3   &6  &9  \\
% \end{pmatrix}
% \quad\text{and}\quad P=\begin{pmatrix}
%     p_{11}     &p_{12}    &p_{13}   &p_{14}    &p_{15}    &p_{16}    &p_{17}    &p_{18}  &p_{19}    \\
%     p_{21}     &p_{22}    &p_{23}   &p_{24}    &p_{25}    &p_{26}    &p_{27}    &p_{28}  &p_{29}   \\
%     p_{31}     &p_{32}    &p_{33}   &p_{34}    &p_{35}    &p_{36}    &p_{37}    &p_{38}  &p_{39}  \\
% \end{pmatrix}.
% $$
The ideal of $\Delta^{3,3}$ is generated by nine $3$-minors.  Using {\tt Bertini} we verified that $\ID$ is primary. The Gr\"obner basis of $J_{[3],\Delta^{3,3}}$ with respect to the lex order is squarefree, which implies that $J_{[3],\Delta^{3,3}}$ is a radical ideal. Hence, $J_{[3],\Delta^{3,3}}$ is prime.
\end{Example}

\begin{Example}
Let $k=s = d = t = 3$, as in Example~\ref{ex:3-3}, and let $\ell=4$.
The ideal of $\Delta^{3,3}$ has $16$ generators of degree 3.
It is radical and its prime decomposition has been computed in {\tt Singular} using various computational techniques, see \cite{andreas}.  An earlier experiment using {\tt Bertini} failed, as the computation did not terminate. % for $s = d = t= k= 3$ and $\ell=4$.
  
  It is shown in \cite{andreas} that the ideal of $\Delta^{3,3}$ has two prime components: 
  \begin{itemize}
  \item The first component is generated by all $3$-minors of the $3\times 12$ matrix $P$.
  \item The second component has $44$ generators of which $16$ are the original generators of the ideal of $\Delta^{3,3}$.
    The remaining $28$ generators are all homogeneous of degree $12$.
  \end{itemize}
  In particular, the second component is not a determinantal ideal in the sense that it is not generated by minors of the matrix~$P$: since $P$ has only three rows, all such minors would have degree at most three, but up to degree three, the second component agrees with the original ideal $I_{\Delta^{3,3}}$.
Moreover, neither component contains a variable, so
both components are ``important'' in the sense of~\cite{Sturmfels02:Solving_polynomial_equations}.  This implies that
our results do not generalize easily.

Further investigation of these types of components, which can be found in \cite{MatroidsCIStatements, clarke2021conditional}, shows that they are ideals of matroid varieties. In this example, the second component is obtained by saturating $J_{[3], \Delta^{3,3}}$ with respect to polynomials corresponding to the bases of the so-called `minimal matroid' of $\Delta^{3,3}$.

\end{Example}
The second direction for generalizing our results is to take $t<\ell$. Here, we mention the largest example we could compute with {\tt Singular}.

\begin{Example}
Let $k = 2 = s, \ell = 4$ and $t = d = 3$. Then we have,
$$\MY = 
\begin{pmatrix}
1   &3  &5  &7  \\
2   &4  &6  &8
\end{pmatrix}\quad \text{and}\quad P=\begin{pmatrix}
    p_{1,1}     &p_{1,2}    &p_{1,3}   &p_{1,4}    &p_{1,5}    &p_{1,6}    &p_{1,7}    &p_{1,8}    \\
    p_{2,1}     &p_{2,2}    &p_{2,3}   &p_{2,4}    &p_{2,5}    &p_{2,6}    &p_{2,7}    &p_{2,8}    \\
    p_{3,1}     &p_{3,2}    &p_{3,3}   &p_{3,4}    &p_{3,5}    &p_{3,6}    &p_{3,7}    &p_{3,8}    \\
\end{pmatrix}.
$$
The ideal $J_{[3],\Delta}$ is radical and has $4$ isomorphism classes of associated prime ideals. We call these isomorphism classes type $1$ to $4$. The number of prime ideals of each type is:
 \smallskip

\begin{center}
    \begin{tabular}{ccccc}
    \toprule
    Type    &Representative     &Occurrences    &Number of Generators     &Codimension  \\
    \midrule
    1       &$I_{146}$   &24            &14             &12         \\
    2       &$I_{1368}$  &6             &12             &12         \\
    3       &$I_{14}^*$  &12            &24             &12         \\
    4       &$I_0$       &1             &44             &14         \\
    \bottomrule
    \end{tabular}
\end{center}

% \vspace{.1cm}
The prime components are not necessarily of type $I_S$. For example, $I_{14}^*$ is the hyperedge ideal associated to
$\Delta_{14}^* = \{1, 4, 56, 57, 58, 67, 68, 78 \},$
that is,
\begin{align*}
    I_{14}^* = \la &p_{11}, p_{21}, p_{31}, p_{14}, p_{24}, p_{34}, \\
    &[12|56], [13|56], [23|56], [12|57], [13|57], [23|57], [12|58], [13|58], [23|58], \\
    &[12|67], [13|67], [23|67], [12|68], [13|68], [23|68], [12|78], [13|78], [23|78] \ra.
\end{align*}
Recall that $I_{14}$ is defined as
\begin{align*}
    I_{14} = \la &p_{11}, p_{21}, p_{31}, p_{14}, p_{24}, p_{34}, 
    [12|56], [13|56], [23|56], [12|78], [13|78], [23|78], \\
    &[235], [236], [237], [238],
    [257], [258], [267], [268], [357], [358], [367], [368] \ra.
\end{align*}
Thus $I_{14} \not\subseteq I_{14}^*$ and $I_{14} \not\supseteq I_{14}^*$. We found that $I_0\subset I_{14}$ which implies that $I_{14}$ is not a prime component of $J_{[3],\Delta}$.
\end{Example}

%\vspace{-5mm}
\section*{Acknowledgement}

\noindent
We are grateful to the anonymous reviewers for their comments. We thank Harshit Motwani for helpful comments on a preliminary version of this manuscript. %pointing out a minor mistake in a previous version of the proof of Theorem~\ref{thm:Ollie}.
The first author was supported by EPSRC Doctoral Training Partnership (DTP) award EP/N509619/1. The second author was partially supported by EPSRC Early Career Fellowship EP/R023379/1 and BOF Starting Grant from Ghent University.

\bibliographystyle{alpha}
\bibliography{Det.bib}

\newpage

\appendix
\section{Initial terms for Theorem~\ref{thm:Ollie}}

\subsection{Case 3.i.} \label{tab:ini-3i}

Table~\ref{tab:3i} lists the initial terms for the products of determinants that appear in Equation~\eqref{eqn:spoly1} on page~\pageref{eqn:spoly1} in Case 3.i. Note that
$$S(g_1, g_2) = \init(h)g_1 -  p_{b,a+1}g_2$$
where $h = [[n] \backslash \{a,b \} | [n] \backslash \{ a, a+1 \}]$. Since the second largest terms of $p_{b,a+1}g_2$ and $\init(h)g_1$ do not cancel, the initial term of $S(g_1, g_2)$ is the second largest term of either $p_{b,a+1}g_2$ or $\init(h)g_1$. Determining which of $p_{b,a+1}g_2$ or $\init(h)g_1$ gives rise to the initial term of $S(g_1, g_2)$ is dependent on $a$ and $b$. For each case, we make note of whether the initial term of $S(g_1, g_2)$ is taken from `(1)' or `(2)', i.e.\ $\init(h)g_1$ or $p_{b,a+1}g_2$ respectively.

The terms in the third column are the second largest terms of the given part of the equation. These are important because these second largest terms become the leading terms of their respective parts once Equation~\eqref{eqn:spoly1} is rearranged to form the reduction of the $S$-polynomial.

% \medskip

\subsection{Case 3.ii.} \label{tab:ini-3ii}

Similarly, Table~\ref{tab:3ii} presents the initial terms for terms appearing in Equation~\eqref{eqn:spoly1} on page~\pageref{eqn:spoly1} in Case 3.ii.  Note that for this case $S(g_1, g_2) = \init(h)g_1 - p_{b,a}g_2$ where $h = [[n] \backslash \{b, a\} | [n] \backslash \{ a, a+1 \}]$.

\subsection{Case 3.iii.} \label{tab:ini-3iii}

Table~\ref{tab:3iii} lists the initial terms for terms appearing in Equation~\eqref{eqn:spoly3} on page~\pageref{eqn:spoly3} in Case 3.iii.  For this case, we have $m = n+1$ and $S(g_1, g_2) = \init(h)g_1 - p_{b,a+1}g_2$ where $h = [[n] \backslash \{a\} | [m] \backslash \{ a, a+1 \}]$.  Similar to Case 3.i, we note for each case of $a$ and $b$, whether the initial term of $S(g_1, g_2)$ is taken from `(1)' or `(2)', i.e.\ $\init(h)g_1$ or $p_{b,a+1}g_2$ respectively.

\subsection{Case 3.iv.} \label{tab:ini-3iv}

Table~\ref{tab:3iv} lists the initial terms for terms appearing in Equation~\eqref{eqn:spoly3} on page~\pageref{eqn:spoly3} in Case 3.iv.  For this case we have $m = n+1$ and $S(g_1, g_2) = \init(h)g_1 - p_{b,a}g_2$ where $h = [[n] \backslash \{a\} | [m] \backslash \{ a, a+1 \}]$. Similar to Case 3.i, we note for each case of $a$ and $b$, whether the initial term of $S(g_1, g_2)$ is taken from `(1)' or `(2)', i.e.\ $\init(h)g_1$ or $p_{b,a}g_2$ respectively.

\begin{landscape}

\begin{table}
  \centering
  \resizebox{\linewidth}{!}{
\begin{tabular}{lll} 
    \toprule
    Part of Equation~\eqref{eqn:spoly1} &
    (Conditions): Initial term, with respect to conditions. &
    (Conditions): Second largest term, with respect to conditions when $i = a$.  \\ 
    \midrule
    $p_{b,a}[[n] \backslash \{b\} \mid [n] \backslash \{a\}]$ &
    $p_{1,1}\dots p_{a-1,a-1}p_{a,a+1}\dots p_{b-1,b} p_{b,a} p_{b+1,b+1}\dots p_{n,n}$ \\ 
    \midrule
    $p_{b,a+1}[[n] \backslash \{b\} \mid [n] \backslash \{a+1\}] = p_{b,a+1}g_{2}$ &
    $p_{1,1}\dots p_{a,a} p_{a+1,a+2}\dots p_{b-1,b}p_{b,a+1}p_{b+1,b+1}\dots p_{n,n}$ \\ 
    \midrule
    $[ib \mid a(a+1)][[n] \backslash \{i,b\} \mid [n] \backslash \{a,a+1\}]$ &
%    \begin{tabular}{l}
        ($i < a$) : $p_{1,1}\dots p_{i-1,i-1}p_{i,a}p_{i+1,i}\dots p_{a,a-1}p_{a+1,a+2}\dots p_{b-1,b}p_{b,a+1}p_{b+1,b+1}\dots p_{n,n}$ \\
 &        ($i = a$) : $p_{1,1}\dots p_{a,a}p_{a+1,a+2}\dots p_{b-1,b}p_{b,a+1}p_{b+1,b+1}\dots p_{n,n}$ % \\
%    \end{tabular}
 &
%    \begin{tabular}{l}
        ($b < n-1$) : $p_{1,1}\dots p_{a,a} p_{a+1,a+2} \dots p_{b-1, b} p_{b,a+1} p_{b+1,b+1}, \dots p_{n-2,n-2} p_{n-1,n} p_{n,n-1}$\\
 &&       ($b = n-1, a < n-2$) : $p_{1,1}\dots p_{a,a} p_{a+1,a+2} \dots p_{n-3, n-2} p_{n-2,n} p_{n-1,a+1} p_{n,n-1}$\\
 &&       ($b = n-1, a = n-2$) : $p_{1,1}\dots p_{n-3,n-3} p_{n-2,n-1} p_{n-1, n-2} p_{n,n}$\\
 &&       ($b = n, a < n-2$) : $p_{1,1}\dots p_{a,a} p_{a+1,a+2} \dots p_{n-3, n-2} p_{n-2,n} p_{n-1,n-1} p_{n,a+1}$\\
 &&       ($b = n, a = n-2$) : $p_{1,1}\dots p_{n-3, n-3} p_{n-2,n-1} p_{n-1,n} p_{n,n-2}$\\
 &&       ($b = n, a = n-1$) : $p_{1,1}\dots p_{n-2, n-2} p_{n-1,n} p_{n,n-1}$\\
%    \end{tabular} \\
 &       ($a < i < b$) : $p_{1,1}\dots p_{a-1,a-1}p_{a,a+2}\dots,p_{i-1,i+1}p_{i,a}p_{i+1,i+2}\dots p_{b-1,b}p_{b,a+1}p_{b+1,b+1}\dots p_{n,n}$ \\
    \midrule
    $[bi \mid a(a+1)][[n] \backslash \{b,i\} \mid [n] \backslash \{a,a+1\}]$ &
    ($i > b$) : $p_{1,1}\dots p_{a-1,a-1}p_{a,a+2}\dots p_{b-1,b+1}p_{b,a}p_{b+1,b+2}\dots p_{i-1,i}p_{i,a+1}p_{i+1,i+1}\dots p_{n,n}$ \\
    \midrule
    $p_{b,a+1}g_2 - \init(h)g_1 = S(g_{1},g_{2})$ &
%    \begin{tabular}{l}
        ($b < n-1$): $p_{1,1}\dots p_{a,a}p_{a+1,a+2}\dots p_{b-1,b}p_{b,a+1}p_{b+1,b+1}\dots p_{n-2,n-2}p_{n-1,n}p_{n,n-1}$, (2) \\ %from g2
  &      ($b = n-1, a < n-2$): $p_{1,1}\dots p_{a,a}p_{a+1,a+2}\dots p_{n-3,n-2} p_{n-2,n}p_{n-1,a+1}p_{n,n-1}$, (2) \\ % from g2
  &      ($b = n-1, a = n-2$): $p_{1,1}\dots p_{n-3,n-3}p_{n-2,n-1}p_{n-1,n-2}p_{n,n}$, (1) \\ % from g1
  &      ($b = n, a < n-2$): $p_{1,1}\dots p_{a,a}p_{a+1,a+2}\dots p_{n-3,n-2} p_{n-2,n}p_{n-1,n-1}p_{n,a+1}$, (2) \\ % from g2
  &      ($b = n, a = n-2$): $p_{1,1}\dots p_{n-3,n-3}p_{n-2,n-1}p_{n-1,n}p_{n,n-2}$, (1) \\ % from g1
  &      ($b = n, a = n-1$): $p_{1,1}\dots p_{n-2,n-2}p_{n-1,n}p_{n,n-1}$, (1) \\ % from g1
%    \end{tabular} & \\
    \bottomrule
\end{tabular}
}
\caption{The initial terms in case 3.i.$^{\strut}$} % \newline\vspace{2cm}
\label{tab:3i}
\end{table}

\begin{table}
  \centering
  \resizebox{\linewidth}{!}{
\begin{tabular}{lll} 
    \toprule
    Part of Equation~\eqref{eqn:spoly1} &
    (Conditions): Initial term, with respect to conditions. &
    (Conditions): Second largest term, with respect to conditions when $i = a + 1$.  \\ \midrule
    $p_{b,a}[[n] \backslash \{b\} \mid [n] \backslash \{a\}] = p_{b,a}g_{2}$ &
    $p_{1,1}\dots p_{b-1,b-1}p_{b,a}p_{b+1,b}\dots p_{a,a-1}p_{a+1,a+1}\dots p_{n,n}$ \\ \midrule
    $p_{b,a+1}[[n] \backslash \{b\} \mid [n] \backslash \{a+1\}]$ &
    $p_{1,1}\dots p_{b-1,b-1}p_{b,a+1}p_{b+1,b}\dots p_{a+1,a}p_{a+2,a+2}\dots p_{n,n}$ \\ \midrule
    $[ib \mid a(a+1)][[n] \backslash \{i,b\} \mid [n] \backslash \{a,a+1\}]$ &
    ($i < b$) : $p_{1,1}\dots p_{i-1,i-1}p_{i,a}p_{i+1,i}\dots p_{b-1,b-2}p_{b,a+1}p_{b+1,b-1}\dots p_{a+1,a-1}p_{a+2,a+2}\dots p_{n,n}$ \\ \midrule
    $[bi \mid a(a+1)][[n] \backslash \{b,i\} \mid [n] \backslash \{a,a+1\}]$ &
    %\begin{tabular}{l}
        ($b < i \le a$) : $p_{1,1}\dots p_{b-1,b-1}p_{b,a}p_{b+1,b}\dots p_{i-1,i-2}p_{i,a+1}p_{i+1,i-1}\dots p_{a+1,a-1}p_{a+2,a+2}\dots p_{n,n}$ \\
    &    ($i = a+1$) : $p_{1,1}\dots p_{b-1,b-1}p_{b,a}p_{b+1,b}\dots p_{a,a-1}p_{a+1,a+1}p_{a+2,a+2}\dots p_{n,n}$ &
        
    %\end{tabular} &
    %\begin{tabular}{l}
        ($a < n-2$) : $p_{1,1}\dots p_{b-1,b-1}p_{b,a}p_{b+1,b}\dots p_{a,a-1}p_{a+1,a+1}\dots p_{n-2,n-2}p_{n-1,n}p_{n,n-1}$\\
    &&    $(a = n-2, b < n-2)$ : $p_{1,1}\dots p_{b-1,b-1}p_{b,n-2}p_{b+1,b}\dots p_{n-3,n-4}p_{n-2,n}p_{n-1,n-1}p_{n,n-3}$\\
    &&    $(a = n-2, b = n-2)$ : $p_{1,1}\dots p_{n-3,n-3} p_{n-2,n-1} p_{n-1, n-2} p_{n,n}$\\
    &&    $(a = n-1, b < n-2)$ : $p_{1,1}\dots p_{b-1,b-1}p_{b,n-1}p_{b+1,b}\dots p_{n-3,n-4}p_{n-2,n-2}p_{n-1,n-3}p_{n,n}$\\
    &&    $(a = n-1, b = n-2)$ : $p_{1,1}\dots p_{n-3, n-3} p_{n-2,n} p_{n-1,n-2} p_{n,n-1}$\\
    &&    $(a = n-1, b = n-1)$ : $p_{1,1}\dots p_{n-2, n-2} p_{n-1,n} p_{n,n-1} $\\
    %\end{tabular} \\
    & ($i > a+1$) : $p_{1,1}\dots p_{b-1,b-1}p_{b,a}p_{b+1,b}\dots p_{a,a-1}p_{a+1,a+2}\dots p_{i-1,i}p_{i,a+1}p_{i+1,i+1}\dots p_{n,n}$ \\
    \midrule
    $p_{b,a}g_2 - \init(h)g_1 = S(g_{1},g_{2})$ &
    $p_{1,1}\dots p_{b-1,b-1}p_{b,a}p_{b+1,b}\dots p_{a,a-1}p_{a+1,a+1}\dots p_{n-2,n-2}p_{n-1,n}p_{n,n-1}$ \\
    \bottomrule
\end{tabular}
}
\caption{The initial terms in case 3.ii.$^{\strut}$} % \\
\label{tab:3ii}
\end{table}

\begin{table}
  \centering
  \resizebox{\linewidth}{!}{
\begin{tabular}{lll} 
    \toprule
    Part of Equation~\eqref{eqn:spoly3}& 
    (Conditions): Initial term, with respect to conditions. & 
    (Conditions): Second largest term, with respect to conditions when $i = a$.  \\ 
    \midrule
    $p_{b,a}[[n] \mid [m] \backslash \{a\}]$ &
    $p_{1,1}\dots p_{a-1,a-1}p_{a,a+1}\dots p_{b-1,b} p_{b,a} p_{b,b+1}\dots p_{n,n+1}$ \\ 
    \midrule
    $p_{b,a+1}[[n] \mid [m] \backslash \{a+1\}] = p_{b,a+1}g_{2}$ &
    $p_{1,1}\dots p_{a,a} p_{a+1,a+2}\dots p_{b-1,b}p_{b,a+1}p_{b,b+1}\dots p_{n,n+1}$ \\ 
    \midrule
    $[ib \mid a(a+1)][[n] \backslash \{i\} \mid [m] \backslash \{a,a+1\}]$ &
    %\begin{tabular}{l}
        $(i < a)$ : $p_{1,1}\dots p_{i-1,i-1}p_{i,a}p_{i+1,i}\dots p_{a,a-1}p_{a+1,a+2}\dots p_{b-1,b}p_{b,a+1}p_{b,b+1}\dots p_{n,n+1}$ \\
    &    $(i = a)$ : $p_{1,1}\dots p_{a,a}p_{a+1,a+2}\dots p_{b-1,b}p_{b,a+1}p_{b,b+1}\dots p_{n,n+1}$ \\
        
    %\end{tabular} &
    %\begin{tabular}{l}
    &&    $(b < n-1)$ : $p_{1,1}\dots p_{a,a} p_{a+1,a+2} \dots p_{b-1, b} p_{b,a+1} p_{b,b+1}, \dots p_{n-2,n-1} p_{n-1,n+1} p_{n,n}$\\
    &&    $(b = n-1, a < n-2)$ : $p_{1,1}\dots p_{a,a} p_{a+1,a+2} \dots p_{n-2, n-1} p_{n-1,a+1} p_{n-1,n+1} p_{n,n}$\\
    &&    $(b = n-1, a = n-2)$ : $p_{1,1}\dots p_{n-2,n-2} p_{n-1,n-1} p_{n-1, n+1} p_{n,n}$\\
    &&    $(b = n, a < n-2)$ : $p_{1,1}\dots p_{a,a} p_{a+1,a+2} \dots p_{n-2, n-1} p_{n-1,n+1} p_{n,a+1} p_{n,n}$\\
    &&    $(b = n, a = n-2)$ : $p_{1,1}\dots p_{n-2, n-2} p_{n-1,n+1} p_{n,n-1} p_{n,n}$\\
    &&    $(b = n, a = n-1)$ : $p_{1,1}\dots p_{n-2, n-2} p_{n-1,n} p_{n,n-1} p_{n,n+1}$\\
    %\end{tabular} \\ 
    &   $(a < i < b)$ : $p_{1,1}\dots p_{a-1,a-1}p_{a,a+2}\dots,p_{i-1,i+1}p_{i,a}p_{i+1,i+2}\dots p_{b-1,b}p_{b,a+1}p_{b,b+1}\dots p_{n,n+1}$ \\
    \midrule
    $[bi \mid a(a+1)][[n] \backslash \{i\} \mid [m] \backslash \{a,a+1\}]$ &
    $(i > b)$ : $p_{1,1}\dots p_{a-1,a-1}p_{a,a+2}\dots p_{b-1,b+1}p_{b,a}p_{b,b+2}\dots p_{i-1,i+1}p_{i,a+1}p_{i+1,i+2}\dots p_{n,n+1}$ \\ 
    \midrule
    $p_{b,a+1}g_2 - \init(h)g_1 = S(g_{1},g_{2}):$ &
  % \begin{sidewaystable}
   % \centering
   %\begin{tabular}{l}
        $(b < n-1)$ : $p_{1,1}\dots p_{a,a}p_{a+1,a+2}\dots p_{b-1,b}p_{b,a+1}p_{b,b+1}\dots p_{n-2,n-1}p_{n-1,n+1}p_{n,n}$, (2)\\ %from g2
    &    $(b = n-1, a < n-2)$ : $p_{1,1}\dots p_{a,a}p_{a+1,a+2}\dots p_{n-2,n-1} p_{n-1,a+1}p_{n-1,n+1} p_{n,n}$, (2) \\ % from g2
    &    $(b = n-1, a = n-2)$ : $p_{1,1}\dots p_{n-2,n-2}p_{n-1,n-1}p_{n-1,n+1}p_{n,n}$, (2) \\ % from g2
    &    $(b = n, a < n-1)$ : $p_{1,1}\dots p_{a,a}p_{a+1,a+2}\dots p_{n-2,n-1} p_{n-1,n+1}p_{n,a+1}p_{n,n}$, (2) \\ % from g2 
    &    $(b = n, a = n-1)$ : $p_{1,1}\dots p_{n-2,n-2}p_{n-1,n}p_{n,n-1}p_{n,n+1}$, (1) \\ % from g1
    %\end{tabular} \\
    \bottomrule
\end{tabular}
%\end{sidewaystable}
}\caption{The initial terms in case 3.iii.$^{\strut}$} % \newline\vspace{2cm} 
\label{tab:3iii}
\end{table}

\begin{table}
  \centering
  \resizebox{\linewidth}{!}{
\begin{tabular}{lll}
    \toprule
    Part of Equation~\eqref{eqn:spoly3} &
    (Conditions): Initial term, with respect to conditions. &
    (Conditions): Second largest term, with respect to conditions when $i = a$.  \\ 
    \midrule
    $p_{b,a}[[n] \mid [m] \backslash \{a\}] = p_{b,a}g_{2}$ &
    $p_{1,1}\dots p_{b,b}p_{b,a}p_{b+1,b+1}\dots p_{a-1,a-1} p_{a,a+1}\dots p_{n,n+1}$ \\ 
    \midrule
    $p_{b,a+1}[[n] \mid [m] \backslash \{a+1\}]$ &
    $p_{1,1}\dots p_{b,b}p_{b,a+1}p_{b+1,b+1}\dots p_{a,a} p_{a+1,a+2}\dots p_{n,n+1}$ \\ 
    \midrule
    $[ib \mid a(a+1)][[n] \backslash \{i\} \mid [m] \backslash \{a,a+1\}]$ &
    $(i < b)$ : $p_{1,1}\dots p_{i-1,i-1}p_{i,a}p_{i+1, i}\dots p_{b,b-1}p_{b,a+1}p_{b+1,b}\dots p_{a,a-1}p_{a+1,a+2}\dots p_{n,n+1}$ \\ 
    \midrule
    $[bi \mid a(a+1)][[n] \backslash \{i\} \mid [m] \backslash \{a,a+1\}]$ &
    %\begin{tabular}{l}
        $(b < i < a)$ : $p_{1,1}\dots p_{b,b}p_{b,a}p_{b+1,b+1}\dots p_{i-1,i-1}p_{i,a+1}p_{i+1,i}\dots p_{a,a-1}p_{a+1,a+2}\dots p_{n,n+1}$ \\
    &    $(i = a)$ : $p_{1,1}\dots p_{b,b}p_{b,a}p_{b+1,b+1}\dots p_{a-1,a-1}p_{a,a+1}\dots p_{n,n+1}$
    %\end{tabular} &
    %\begin{tabular}{l}
    &    $(a < n-1)$ : $p_{1,1}\dots p_{b,b}p_{b,a}p_{b+1,b+1}\dots p_{a-1,a-1}p_{a,a+1}\dots p_{n-2,n-1}p_{n-1,n+1}p_{n,n}$\\
    &&    $(a = n-1, b < n-2)$ : $p_{1,1}\dots p_{b,b}p_{b,n-1}p_{b+1,b+1}\dots p_{n-3,n-3}p_{n-2,n+1}p_{n-1,n}p_{n,n-2}$\\
    &&    $(a = n-1, b = n-2)$ : $p_{1,1}\dots p_{n-2,n-2} p_{n-2,n} p_{n-1, n-1} p_{n,n+1}$\\
    &&    $(a = n, b < n-2)$ : $p_{1,1}\dots p_{b,b} p_{b,n} p_{b+1,b+1} \dots p_{n-3, n-3} p_{n-2,n-1} p_{n-1,n-2} p_{n,n+1}$\\
    &&    $(a = n, b = n-2)$ : $p_{1,1}\dots p_{n-2, n-2} p_{n-2,n+1} p_{n-1,n-1} p_{n,n}$\\
    &&    $(a = n, b = n-1)$ : $p_{1,1}\dots p_{n-1, n-1} p_{n-1,n+1} p_{n,n}$\\
    %\end{tabular}\\ 
    &   $(i > a)$ : $p_{1,1}\dots p_{b,b}p_{b,a}p_{b+1,b+1}\dots,p_{a-1,a-1}p_{a,a+2}\dots p_{i-1,i+1}p_{i,a+1}p_{i+1,i+2}\dots p_{n,n+1}$ \\
    \midrule
    $p_{b,a}g_2 - \init(h)g_1 = S(g_{1},g_{2})$ &
    %\begin{tabular}{l}
        $(a < n-1)$ : $p_{1,1}\dots p_{b,b}p_{b,a}p_{b+1,b+1}\dots p_{a-1,a-1}p_{a,a+1}\dots p_{n-2,n-1}p_{n-1,n+1}p_{n,n}$, (2) \\ %from g2
    &    $(a = n-1, b < n-2)$ : $p_{1,1}\dots p_{b,b}p_{b,\textcolor{red}{n-1}}p_{b+1,b+1}\dots p_{n-2,n-2}p_{n-1,n+1}p_{n,n}$, (2) \\ %from g2
    &    $(a = n-1, b = n-2)$ : $p_{1,1}\dots p_{n-2,n-2}p_{n-2,n-1}p_{n-1,n+1}p_{n,n}$, (2) \\ % from g2
    &    $(a = n, b < n-1)$ : $p_{1,1}\dots p_{b,b}p_{b,n}p_{b+1,b+1}\dots p_{n-2,n-2} p_{n-1,n+1}p_{n,n-1}$, (2) \\ % from g2
    &    $(a = n, b = n-1)$ : $p_{1,1}\dots p_{n-1,n-1}p_{n-1,n+1}p_{n,n}$, (1)\\ % from g1
    %\end{tabular} \\
    \bottomrule
\end{tabular}
}
\caption{The initial terms in case 3.iv.$^{\strut}$} % \newline
\label{tab:3iv}
\end{table}

\end{landscape}

\end{document}